	\newtheorem{theorem}{Theorem}
	\newtheorem{lemma}[theorem]{Lemma}
	\newtheorem{definition}[theorem]{Definition}
	\newtheorem*{property}{Property (P)}
	\newtheorem{proposition}[theorem]{Proposition}
	\newtheorem{corollary}[theorem]{Corollary}
\newcommand{\opt}{\textnormal{OPT}}
\newcommand{\inn}{\textnormal{in}}
\newcommand{\boldnoindent}[1]{\smallskip\noindent{\bf\boldmath #1.} \ }
\newcommand{\case}[1]{\smallskip\noindent{\bf\boldmath #1:} }
\definecolor{orange}{rgb}{1,0.9,0}
\definecolor{violet}{rgb}{0.8,0,1}
\definecolor{darkgreen}{rgb}{0,0.7,0}
\definecolor{grey}{rgb}{0.75,0.75,0.75}
\begin{document}

\title{\vspace*{-5mm}\Large Two-connected spanning subgraphs with at most $\frac{10}{7}\opt$ edges} 
\author{Klaus Heeger \and Jens Vygen}
\date{\small Research Institute for Discrete Mathematics, University of Bonn}

\begingroup
\makeatletter
\let\@fnsymbol\@arabic
\maketitle
\endgroup

\begin{abstract}
We present a $\frac{10}{7}$-approximation algorithm for the minimum
two-vertex-connected spanning subgraph problem.
\end{abstract}

\section{Introduction}

The two-vertex-connected spanning subgraph problem (2VC) is a fundamental problem in survivable network design.
Given a 2-vertex-connected graph, it asks for a 2-vertex-connected spanning subgraph with as few edges as possible.

In this paper, we describe a $\frac{10}{7}$-approximation algorithm for this problem.
Let $\opt(G)$ denote the minimum number of edges in any 2-connected spanning subgraph of $G$.
Then our algorithm computes a 2-connected spanning subgraph of $G$ with at most $\frac{10}{7}\opt(G)$ edges in $O(n^3)$ time, where $n=|V(G)|$.

\boldnoindent{Previous work}
2VC is NP-hard because a graph $G$ is Hamiltonian if and only if it contains a 2-vertex-connected spanning
subgraph with $n$ edges. 
Czumaj and Lingas \cite{czumaj} showed that the problem is APX-hard.
Obtaining a 2-approximation algorithm is easy, for example computing any open ear-decomposition and deleting trivial ears 
(cf.\ Section~\ref{sec:eardec}) does the job.

Khuller and Vishkin \cite{khuller} found a $\frac{5}{3}$-approximation algorithm. 
Garg, Santosh and Singla \cite{garg} improved the approximation ratio to $\frac{3}{2}$. 
The idea of both algorithms is to begin with a DFS tree, add edges to obtain 2-vertex-connectivity, and delete edges that are not needed.
There are also other $\frac{3}{2}$-approximation algorithms, e.g., by Cheriyan and Thurimella \cite{thuri} 
who compute a smallest spanning subgraph with minimum degree 1 and extend it by a minimal set of edges to be 2-vertex-connected. 

Better approximation ratios have been claimed several times:
ratio $\frac{4}{3}$ by Vempala and Vetta \cite{vempala},
ratio $\frac{5}{4}$ by Jothi, Raghavachari and Varadarajan \cite{jothi},
and ratio $\frac{9}{7}$ by Gubbala and Raghavachari \cite{gubbala}.
However, we will prove that the approach of Vempala and Vetta \cite{vempala} does not work (see the Appendix),
and Jothi, Raghavachari and Varadarajan withdrew their claim (see \cite{gubbala}).
Gubbala and Raghavachari \cite{gubbala} announced a full paper with a complete  proof, but today there is only
the approximately 70-page proof in Guballa's thesis \cite{gub} which contains some inconsistencies.
According to Raghavachari [personal communication, 2016], they are not planning anymore to revise their proof.

Apparently, the naturally arising question whether there is an approximation algorithm with ratio better than $\frac{3}{2}$ for 2VC
has been open for almost 25 years. It is also mentioned at the end of Nutov's \cite{nutov} recent survey,
which discusses generalizations such as the min-cost version and $k$-connected subgraphs.
Here we answer this question affirmatively.

\boldnoindent{Our approach}
Our work is inspired by the work of Cheriyan, Seb\H{o} and Szigeti \cite{cheriyan}, 
who proved the first ratio better than $\frac{3}{2}$ for the related 2-edge-connected spanning subgraph problem (2EC).
Our algorithm works as follows.
\begin{itemize}
\item[1.] We first delete some edges that we can identify as redundant (Section \ref{sec:redundant}). 
\item[2.] Then we compute an open ear-decomposition with special properties (Sections \ref{sec:neweardec}--\ref{sec:prop567}). 
\item[3.] Finally we delete the ``trivial'' ears (those that consist of a single edge).
\end{itemize}
We compare our result to three lower bounds (Section \ref{sec:lowerbounds}), two of which are well known lower bounds
even for 2EC. The third lower bound is new and exploits the properties obtained in steps 1 and 2.

\section{Ear-decompositions \label{sec:eardec}}

In this paper, all graphs are simple and undirected. When we say 2-connected, we mean 2-vertex-connected.
By $\delta(v)$ and $\Gamma(v)$ we denote the set of incident edges and the set of neighbours of a vertex $v$, respectively;
so $|\delta(v)|=|\Gamma(v)|$ is the degree of $v$.
When we write $G-e$, $G+f$, or $G-v$, we mean deleting an edge $e$, adding an edge $f$, or deleting a vertex
$v$ and all its incident edges.
By $V(G)$ and $E(G)$ we denote the vertex set and edge set of $G$; let $n:=|V(G)|$
be the number of vertices of the given graph $G$. Obviously $\opt(G)\ge n$.

The following well-known graph-theoretic concept is due to Whitney \cite{whitney}.

\begin{definition}
An \emph{ear-decomposition} of a graph $G$ is a sequence $P_0,P_1,\ldots,P_k$ of graphs, 
where 
$P_0$ consists of a single vertex, 
$V(G)=V(P_0)\cup\cdots\cup V(P_k)$, $E(G)=E(P_1)\cup\cdots\cup E(P_k)$,
and for all $i\in\{1,\ldots,k\}$ we have:
		\begin{itemize}
			\item $P_i$ is a circuit with exactly one vertex in $V(P_0)\cup\cdots\cup V(P_{i-1})$ \emph{(closed ear)}, or
			\item $P_i$ is a path whose endpoints, but no inner vertices, belong to $V(P_0)\cup\cdots\cup V(P_{i-1})$ \emph{(open ear)}.
		\end{itemize}

A vertex in $V(P_i)\cap (V(P_0)\cup\cdots\cup V(P_{i-1}))$ is called \emph{endpoint} of $P_i$ (even if $P_i$ is closed). 
An ear has one or two endpoints; its other vertices are called \emph{inner}.
Let $\inn(P)$ denote the set of inner vertices of an ear $P$. 
If $P$ and $Q$ are ears and $p\in\inn(P)$ is an endpoint of $Q$, then $Q$ is \emph{attached} to $P$ (at $p$).

An ear-decomposition is \emph{open} if all ears except $P_1$ are open.
We call an ear of length $l$ (that is, with $l$ edges) an \emph{$l$-ear}. 
1-ears are also called \emph{trivial} ears.
%, other ears are \emph{nontrivial}.
An ear is called \emph{pendant} if no nontrivial ear is attached to it.
\end{definition}

We can always assume that trivial ears come at the end of the ear-decomposition.
Whitney \cite{whitney} showed that a graph is 2-connected if and only if it has an open ear-decomposition
(and a graph is 2-edge-connected if and only if it has an ear-decomposition).
Therefore, 2VC is equivalent to computing an open ear-decomposition with as many trivial ears as possible
(deleting the trivial ears yields a 2-connected spanning subgraph).
Indeed, an arbitrary open ear-decomposition yields a 2-approximation algorithm because the number of edges in nontrivial ears
is at most $2(n-1)$: for every nontrivial ear, the number of edges is at most twice the number of inner vertices.

This is tight for 2-ears, and it already shows that we need to pay special attention to 2-ears and 3-ears. 
Let $\varphi(G)$ denote the minimum number of even ears in any ear-decomposition of $G$.
The following result, based on a fundamental theorem of Frank \cite{frank}, helps us dealing with 2-ears.

\begin{proposition}[Cheriyan, Seb\H{o} and Szigeti \cite{cheriyan}]
\label{prop:cheriyan}
For any 2-connected graph $G$ one can compute an open ear-decomposition with $\varphi(G)$ even ears in $O(n^3)$ time.
\end{proposition}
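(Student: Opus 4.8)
The plan is to use the theorem of Frank \cite{frank} as a black box and to treat the openness requirement separately. Since $G$ is 2-connected it is in particular 2-edge-connected, so Frank's theorem applies and computes, in $O(n^3)$ time, an ear-decomposition $P_0,\dots,P_k$ of $G$ with the minimum possible number of even ears; by definition of $\varphi$ this number equals $\varphi(G)$. The only thing that can fail is that some $P_i$ with $i\ge 2$ is closed, so the decomposition need not be open. Hence the whole proposition reduces to the following claim: \emph{every} ear-decomposition of a 2-connected graph can be transformed into an open one without increasing the number of even ears. Applying this to Frank's minimizer yields an open ear-decomposition with at most $\varphi(G)$ even ears, and since $\varphi(G)$ is the minimum over all ear-decompositions (open ones included), it has exactly $\varphi(G)$.

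To prove the claim I would induct on the number of closed ears of index $\ge 2$, removing the one of largest index, say $P_i$, attached to the earlier graph at a single vertex $v$; then all ears after $P_i$ are open. Because $G$ is 2-connected, $v$ is not a cut vertex, so the inner vertices of $P_i$ cannot be separated from the rest of the graph by deleting $v$. This forces the existence of a (necessarily open, index $>i$) ear $Q$ with one endpoint $w\in\inn(P_i)$ and its other endpoint outside the part of the graph hanging off the interior of $P_i$. The circuit $P_i$ splits at $v$ and $w$ into two arcs, and I would replace the pair $(P_i,Q)$ by two open ears: one obtained by concatenating an arc of $P_i$ from $v$ to $w$ with $Q$ (an open ear from $v$ to the far endpoint of $Q$), and one consisting of the complementary arc (an open ear from $v$ to $w$, which is now available). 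This leaves the edge set unchanged, turns one closed ear into open ears, and the resulting family can be reordered into a valid open ear-decomposition.

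The heart of the argument, and the step I expect to be the main obstacle, is the parity bookkeeping. Merging one arc of $P_i$ with $Q$ and keeping the other arc as a separate ear changes the lengths, hence possibly the parities, of the ears involved, so a priori an extra even ear could appear. The key point is that the circuit offers two arcs between $v$ and $w$ whose lengths sum to the length of $P_i$, so I may choose which arc to merge with $Q$. A short case distinction on the three relevant parities (the two arc lengths and the length of $Q$) shows that at least one of the two choices never increases the number of even ears; this is exactly where Frank's optimization and the openness requirement are reconciled. Combined with the (routine but fiddly) facts that a suitable $Q$ exists by 2-connectivity and that the modified family of ears can be linearly ordered into a valid ear-decomposition, this completes the induction. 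The overall running time is dominated by Frank's $O(n^3)$ algorithm, as there are $O(n^2)$ ears and each opening step is cheap.
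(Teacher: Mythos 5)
The paper itself does not prove this proposition --- it is imported from Cheriyan, Seb\H{o} and Szigeti \cite{cheriyan} --- so your attempt can only be judged on its own merits. Your architecture (Frank's theorem as a black box, then ``open up'' closed ears without increasing the number of even ears) is the natural one and matches the cited source in spirit, and your three-parity case analysis for a single splitting step is correct. The genuine gap is the sentence claiming that 2-connectivity \emph{forces} an ear $Q$ with one endpoint $w\in\inn(P_i)$ and the other endpoint outside the part hanging off $\inn(P_i)$. What 2-connectivity forces is only that \emph{some} ear leaves that hanging part; its endpoint inside the hanging part may be an inner vertex of a descendant ear rather than a vertex of $P_i$, in which case your surgery (splitting $P_i$ at $v$ and $w$) cannot even be set up.

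Here is a concrete counterexample to your existence claim. Let $P_0=\{u\}$, let $P_1$ be the triangle on $u,a,b$, let $P_2$ be the closed ear with edges $\{a,c\},\{c,d\},\{d,a\}$, let $P_3$ be the open ear $c$-$e$-$e'$-$d$, and let $P_4$ be the open ear $e$-$f$-$f'$-$b$. This graph is 2-connected, all ears are odd (so this decomposition attains $\varphi(G)=0$ and could perfectly well be Frank's output), and $P_2$ is the unique closed ear of index $\ge 2$, attached at $v=a$. The only ear with an endpoint in $\inn(P_2)=\{c,d\}$ is $P_3$, and \emph{both} of its endpoints lie in $\inn(P_2)$; the only ear leaving the hanging part $\{c,d,e,e'\}$ is $P_4$, which attaches at $e\notin\inn(P_2)$. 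So no ear can play the role of your $Q$. Moreover, no repair local to $P_2\cup P_3$ is possible: this union meets the rest of the graph only at $a$, so the first ear of any ear-decomposition of it is necessarily closed. Any correct opening procedure must restructure a whole chain of ears at once --- here an open decomposition with no even ears is $P_1$, the 5-ear $a$-$c$-$e$-$f$-$f'$-$b$, the 3-ear $a$-$d$-$e'$-$e$, and the trivial ear $\{c,d\}$, whose first nontrivial new ear mixes edges of $P_2$, $P_3$ and $P_4$. Threading such a path through several ears (compare the $S$-path construction in the paper's proof of Lemma~\ref{lemma:prop567}) and then redoing the parity bookkeeping when several ears change length simultaneously is exactly the nontrivial content of the cited result that your argument elides.
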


Taking such an ear-decomposition and deleting all trivial ears yields a $\frac{3}{2}$-approximation for 2VC:
for every nontrivial ear, the number of edges is at most $\frac{3}{2}$ times the number of inner vertices, except for 2-ears, for which we have to add $\frac{1}{2}$;
hence we end up with at most $\frac{3}{2}(n-1)+\frac{1}{2}\varphi(G)$ edges, which is no more than $\frac{3}{2}\opt(G)$ because
$n-1+\varphi(G)$ is a lower bound (even for 2EC, see Section \ref{sec:lowerbounds}).

In order to improve upon $\frac{3}{2}$ we have to look at 3-ears.
Cheriyan, Seb\H{o} and Szigeti \cite{cheriyan} proceed by making all 3-ears pendant and 
such that inner vertices of different 3-ears are not adjacent;
they show that this yields a  $\frac{17}{12}$-approximation for 2EC (this ratio was later improved to $\frac{4}{3}$ by \cite{vygen}).
However, their ear-decomposition is not open, so we cannot use it for 2VC.
In fact, there are 2-connected graphs that do not have an open ear-decomposition with $\varphi(G)$ even ears
in which all 3-ears are pendant: see the example in Figure \ref{ex:notall3earspendant}.
Therefore we will require different properties for 3-ears in our ear-decomposition.

\begin{figure}
\begin{center}
	\begin{tikzpicture}[scale=1.3, thick]
		\node[circle, fill, inner sep=2] (a1) at (0.5,0.3) {};
		\node[ fill, inner sep=2] (a2) at (0,1) {};
		\node[circle, fill, inner sep=2] (a3) at (0,2) {};
		\node[circle, fill, inner sep=2] (a4) at (1,2) {};
		\node[fill, inner sep=2] (a5) at (1,1) {};
		\node[ fill, inner sep=2] (b1) at (2,2) {};
		\node[circle, fill, inner sep=2] (b2) at (2,1) {};
		\node[ fill, inner sep=2] (c1) at (3,2) {};
		\node[fill, inner sep=2] (c2) at (3,1) {};
		\node[ fill, inner sep=2] (d1) at (-1,2) {};
		\node[circle, fill, inner sep=2] (d2) at (-1,1) {};
		\node[ fill, inner sep=2] (e1) at (-2,2) {};
		\node[ fill, inner sep=2] (e2) at (-2,1) {};
		\draw[] (a1) to (a2) to (a3) to (a4) to (a5) to (a1);
		\draw[red] (a4) to (b1); \draw[red] (b1) to (b2); 
		\draw[red] (b2) to (a1);
		\draw[bend left, darkgreen] (a4) to (c1); \draw[darkgreen] (c1) to (c2) to (b2);
		\draw[blue] (a3) to (d1); \draw[blue] (d1) to (d2); \draw[blue] (d2) to (a1);
		\draw[bend right, brown] (a3) to (e1); \draw [brown] (e1) to (e2); \draw[brown] (e2) to (d2);
	\end{tikzpicture}
\caption{\label{ex:notall3earspendant} A graph $G$ with $\varphi(G)=0$ in which every ear-decomposition has even ears or non-pendant 3-ears.}
\end{center}
\end{figure}
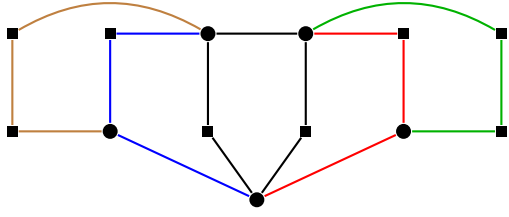

\section{Redundant edges \label{sec:redundant}}

We first need to delete some edges of our graph. 

\begin{definition}
Let $G$ be a 2-connected graph. 
An edge $e\in E(G)$ is \emph{redundant} if $G-e$ is 2-connnected and $\opt(G-e)=\opt(G)$.
\end{definition}

In other words, an edge is redundant unless it is contained in every optimum solution.
Edges incident to a vertex of degree 2 are never redundant. 
The graph in Figure \ref{ex:notall3earspendant} has no redundant edges.

Of course it is in general difficult to decide whether a certain edge is redundant. 
But in some situations we can do it, as we show now.
A slightly weaker version of the following lemma ($G-f$ 2-connected $\Rightarrow$ $f$ redundant) was already shown by Chong and Lam \cite{chong}.

\begin{lemma}
\label{lemma:redundant}
Let $G$ be a 2-connected graph, let $a,b,c,d,e \in V(G)$ be five vertices
with $\Gamma (a)=\{c,d\}$ and $\Gamma (b)=\{c,e\}$ and $f:=\{d,e\}\in E(G)$ (see Figure \ref{fig:redundant}(a)).
Then $f$ is redundant if and only if $(G-c)-f$ is connected.
\end{lemma}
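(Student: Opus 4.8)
The plan is to prove both directions, using that $f$ is redundant exactly when $G-f$ is $2$-connected \emph{and} $\opt(G-f)=\opt(G)$. One direction is immediate: if $f$ is redundant then $G-f$ is $2$-connected, so deleting the single vertex $c$ leaves it connected, i.e.\ $(G-f)-c=(G-c)-f$ is connected. The substance is the converse, so from now on I assume $(G-c)-f$ is connected and establish the two defining properties of redundancy.

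First I would show $G-f$ is $2$-connected by checking that $(G-f)-v$ is connected for every $v$. Two $d$--$e$ connections are available throughout: the path $d$--$a$--$c$--$b$--$e$ (present because $a,b$ have degree $2$), and a path $Q$ from $d$ to $e$ avoiding $a,b,c$ and $f$, obtained by deleting the two leaves $a,b$ from the connected graph $(G-c)-f$. Now for $v=c$ connectivity of $(G-f)-c$ is exactly the hypothesis; for $v\in\{d,e\}$ we have $(G-f)-v=G-v$, connected since $G$ is $2$-connected; for $v\notin\{a,b,c,d,e\}$ the path $d$--$a$--$c$--$b$--$e$ survives in $G-v$ and shows $f$ is not a bridge of $G-v$; and for $v\in\{a,b\}$ the path $Q$ does the same. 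In every case $(G-f)-v$ is connected, so $G-f$ is $2$-connected.

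Next I would prove $\opt(G-f)=\opt(G)$. The inequality $\opt(G-f)\ge\opt(G)$ is trivial, since any $2$-connected spanning subgraph of $G-f$ is also one of $G$. For the reverse I take an optimum solution $H$ of $G$ containing $f$ (otherwise there is nothing to prove) and perform an edge swap. Since $a,b$ have degree $2$, $H$ contains $\{a,c\},\{a,d\},\{b,c\},\{b,e\}$ and hence the path $d$--$a$--$c$--$b$--$e$; as $H$ is $2$-connected it has no bridge, so $H-f$ is connected, and it cannot be $2$-connected (else it would beat $H$). Every cut vertex of $H-f$ separates $d$ from $e$ and hence lies on the path above, so $H-f$ has a cut vertex $w\in\{a,b,c\}$; if $w\in\{a,b\}$ then every $d$--$e$ path passes through $w$ and therefore also uses $a$--$c$ (resp.\ $b$--$c$), meeting $c$. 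In all cases every $d$--$e$ path meets $c$, i.e.\ $c$ is a cut vertex, and $(H-f)-c$ has exactly two components $C_1\ni d,a$ and $C_2\ni e,b$ (two, because adding $f$ back reconnects $H-c$). Because $(G-c)-f$ is connected and $C_1,C_2$ partition $V(G)\setminus\{c\}$, there is an edge $g\ne f$ of $G$ joining $C_1$ and $C_2$ and not incident to $c$; the only $H$-edge between $C_1$ and $C_2$ is $f$, so $g\notin E(H)$. As $a$ has neighbours only $c,d$ and $b$ only $c,e$, neither $a$ nor $b$ can be an endpoint of $g$. It then remains to show that $H-f+g$ is $2$-connected, which produces a $2$-connected spanning subgraph of $G-f$ with $\opt(G)$ edges, whence $\opt(G-f)\le\opt(G)$ and the two optima coincide.

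The main obstacle is this last step: verifying that the single swapped edge $g$ restores $2$-connectivity, i.e.\ that it jumps over \emph{all} cut vertices of $H-f$ at once. What makes one edge enough is the rigidity forced by the degree-$2$ vertices: the only candidates for cut vertices are $a,b,c$, the vertex $c$ is always among them, and $a,b$ sit as pendants inside $C_1,C_2$ and hence cannot be endpoints of $g$; this pins the endpoints of $g$ to the extreme ends of the block path of $H-f$, so the cycle created by $g$ runs through $a$, $b$, and $c$ alike and merges every block. Turning this block-structure picture into a rigorous argument is the crux of the proof.
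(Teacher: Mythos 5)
Your setup is correct and follows the same route as the paper: take an optimum $H$ containing $f$, show the edges at the degree-2 vertices $a,b$ force the path $d$-$a$-$c$-$b$-$e$ into $H$, locate every cut vertex of $H-f$ in $\{a,b,c\}$, reduce to $c$, observe $(H-f)-c$ has exactly two components $C_1\ni a,d$ and $C_2\ni b,e$, and use connectivity of $(G-c)-f$ to find a swap edge $g\neq f$ joining $C_1$ and $C_2$ avoiding $a,b,c$. (Your separate direct verification that $G-f$ is 2-connected is fine but unnecessary: once $(H-f)+g$ is shown to be a 2-connected spanning subgraph, 2-connectivity of $G-f\supseteq (H-f)+g$ comes for free.) But the proof stops exactly where the lemma's real content lies: you never prove that $H':=(H-f)+g$ is 2-connected, and you say so yourself (``Turning this block-structure picture into a rigorous argument is the crux of the proof''). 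That is a genuine gap, not a routine verification. Moreover, the heuristic you offer in its place --- that the endpoints of $g$ are ``pinned to the extreme ends of the block path'' of $H-f$ --- is not something you have established; nothing in your argument prevents an endpoint of $g$ from lying, say, in the block of $C_1$ containing $c$, so this picture cannot simply be cited.

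The missing step closes cleanly by re-running your own cut-vertex localization on $H'$ instead of inventing block-path structure. Suppose $x$ is a cut vertex of $H'$. Since $(H-f)-x$ is a spanning subgraph of $H'-x$, the graph $(H-f)-x$ is disconnected too, so by your earlier argument $x\in\{a,b,c\}$. The case $x=c$ is impossible, because $H'-c=\bigl((H-f)-c\bigr)+g$ is connected by the choice of $g$ (it joins the only two components $C_1,C_2$). And the cases $x\in\{a,b\}$ reduce to the case $x=c$: if $x=a$, then since $g$ is not incident to $a$, the vertex $a$ has only the neighbours $c,d$ in $H'$, so $H'-a$ splits $c$ from $d$; letting $K$ be the component of $d$ in $H'-a$, every $H'$-edge leaving $K\cup\{a\}$ is the single edge $\{a,c\}$ (and $b,e\notin K$, since $b,e$ lie in $c$'s component), whence $H'-c$ is disconnected --- contradicting what was just shown. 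This is precisely how the paper finishes; with it, $H'$ is a 2-connected spanning subgraph of $G-f$ with $\opt(G)$ edges, giving both $\opt(G-f)=\opt(G)$ and 2-connectivity of $G-f$ at once.
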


\begin{figure}	
\begin{center}
	\begin{tikzpicture}[scale=0.75,thick]
	\node at (-1.5,-2) {(a)};
		\node[fill, inner sep=2, label=180:$a$] (a) at (-0.2,0.1) {};
		\node[fill, inner sep=2, label=0:$b$] (b) at (2.2,0.1) {};
		\node[circle,fill, inner sep=2, label=270:$c$] (c) at (1,-1) {};
		\node[circle,fill, inner sep=2, label=180:$d$] (d) at (0,2) {};
		\node[circle,fill, inner sep=2, label=0:$e$] (e) at (2,2) {};
		\node[label=90:$f$] at (1,1.8) {};
		\draw[red] (a) to (d);
		\draw[red] (a) to (c);
		\draw[red] (c) to (b);
		\draw[red] (e) to (b);
		\draw[dashed] (e) to (d);
	\end{tikzpicture}
	\hfill
	\begin{tikzpicture}[scale=0.75,thick]
	\node at (-1.5,-2) {(b)};
		\node[fill, inner sep=2, label=180:$a$] (a) at (-0.2,0.1) {};
		\node[fill, inner sep=2, label=0:$b$] (b) at (2.2,0.1) {};
		\node[circle,fill=black!30, inner sep=2, label=270:$c$] (c) at (1,-1) {};
		\node[circle,fill, inner sep=2, label=180:$d$] (d) at (0,2) {};
		\node[circle,fill, inner sep=2, label=0:$e$] (e) at (2,2) {};
		\node[label=90:$f$] at (1,1.8) {};
		\node[label=270:{\textcolor{darkgreen}{$g$}}] (g) at (1,-1.7) {};
		\draw (a) to (d);
		\draw[dashed, black!30] (a) to (c);
		\draw[dashed, black!30] (c) to (b);
		\draw (e) to (b);
		\draw[dashed, black!30] (e) to (d);
		\draw[densely dotted] (0.15,-2) arc (-30:30:5cm);
		\draw[densely dotted] (1.85,-2) arc (210:150:5cm);
		\draw[thick,bend right, darkgreen] (0.15,-1.6) to (1.85,-1.6);
	\end{tikzpicture}
	\hfill
	\begin{tikzpicture}[scale=0.75,thick]
	\node at (-2,-2) {(c)};
		\node[fill, inner sep=2, label=268:{$d'\!\!=\!a$}] (a) at (-0.2,0.1) {};
		\node[fill, inner sep=2, label=0:$b$] (b) at (2.2,0.1) {};
		\node[circle,fill, inner sep=2, label=270:$c$] (c) at (1,-1) {};
		\node[circle,fill, inner sep=2, label=180:$d$] (d) at (0,2) {};
		\node[circle,fill, inner sep=2, label=0:$e$] (e) at (2,2) {};
		\node[label=90:$f$] at (1,1.8) {};
		\node[circle,fill, inner sep=2, label=272:$e'$] (ee) at (-2.2,0.1) {};
		\draw[red] (a) to (d);
		\draw[red] (a) to (c);
		\draw[red] (c) to (b);
		\draw[red] (e) to (b);
		\draw (e) to (d);
		\draw[dashed] (a) to (ee);
		\node[label=90:$f'$] at (-1.2,-0.1) {};
	\end{tikzpicture}

\caption{\label{fig:redundant}Identifying some redundant edges.
Here and in all following figures, vertices shown as squares have no other incident edges than the ones shown.
In (a), $f$ is redundant by Lemma \ref{lemma:redundant}. 
In (b), $g$ is an edge connecting the two connected components of $(H-f)-c$.
In (c), deleting $f'$ exhibits $f$ as redundant.}
\end{center}
 \end{figure}
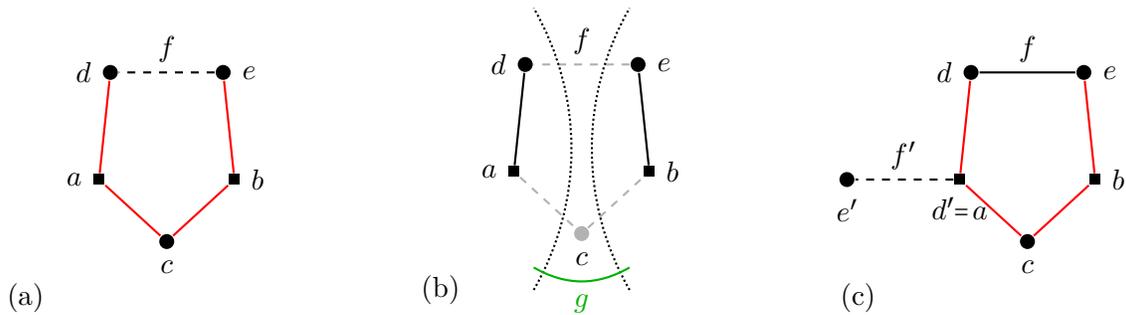

\begin{proof}
If $f$ is redundant, then $G-f$ is 2-connected, so $(G-c)-f=(G-f)-c$ is connected. 

For the other direction, let $(G-c)-f$ be connected.
Let $H$ be a 2-connected spanning subgraph of $G$ with minimum number of edges. 
If $f\notin E(H)$, we are done.

So assume $f\in E(H)$; then $H$ contains all five edges of Figure \ref{fig:redundant}(a). 
Moreover, $H-f$ is not 2-connected (due to the minimality of $H$), so let $x$ be a vertex such that $(H-f)-x$ is disconnected.
Clearly $x\in\{a,b,c\}$ because $H-x$ is connected. 
Then $(H-f)-c$ is disconnected.

Let $g\in E(G)\backslash\{f\}$ be an edge connecting the two connected components of $(H-f)-c$
(see Figure \ref{fig:redundant}(b)).
We claim that $H':=(H-f)+g$ is 2-connected, implying that $f$ is redundant.

Suppose $H'$ is not 2-connected, and  let $x$ be a vertex such that $H'-x$ is disconnected.
As above we have $x\in\{a,b,c\}$ and conclude that $H'-c$ is disconnected.
But this contradicts the choice of $g$.
\end{proof}

In the following we will often need the following property:
 
\begin{property}
A graph $G$ satisfies property (P) if it is 2-connected and
for any five vertices $a,b,c,d,e\in V(G)$ with $\Gamma (a)=\{c,d\}$ and $\Gamma (b)=\{c,e\}$ and $f=\{d,e\}\in E(G)$
we have that $f$ is not redundant.
\end{property} 

We can obtain property (P) by successively deleting edges that we can identify as redundant 
by Lemma \ref{lemma:redundant}:

\begin{corollary}
\label{cor:redundant}
Let $G$ be a 2-connected graph. Then one can compute a spanning subgraph $\bar G$ of $G$ in $O(n^3)$ time such that
$\opt(G)=\opt(\bar G)$ and $\bar G$ satisfies property (P).
\end{corollary}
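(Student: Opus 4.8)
The plan is to start from $\bar G := G$ and repeatedly delete edges that Lemma \ref{lemma:redundant} certifies as redundant, until none of the configurations forbidden by property (P) survives. Concretely, as long as there are five vertices $a,b,c,d,e$ with $\Gamma(a)=\{c,d\}$, $\Gamma(b)=\{c,e\}$ and $f=\{d,e\}\in E(\bar G)$ such that $(\bar G-c)-f$ is connected, I set $\bar G := \bar G - f$. By Lemma \ref{lemma:redundant} each such $f$ is redundant, so $\bar G-f$ is again 2-connected and $\opt(\bar G-f)=\opt(\bar G)$; by induction the invariants ``$\bar G$ is 2-connected'' and ``$\opt(\bar G)=\opt(G)$'' are maintained, and when the loop stops no forbidden configuration is redundant, which is exactly property (P). Termination is immediate since every iteration removes an edge. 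For correctness this is essentially all; the real work lies in the $O(n^3)$ bound, because a naive implementation performs up to $\Theta(n^2)$ deletions, each preceded by a $\Theta(n^2)$-time connectivity test, giving only $O(n^4)$.

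Two structural observations drive the speed-up. First, since $\bar G$ stays 2-connected its minimum degree is always at least $2$, while degrees only decrease; hence each vertex reaches degree $2$ at most once and then keeps it forever, so over the whole run there are at most $n$ events ``a vertex drops to degree $2$''. Second, a forbidden configuration is determined by its common vertex $c$ together with two degree-$2$ neighbours of $c$; since we never add edges, a new redundant configuration at $c$ can only appear when a neighbour of $c$ newly becomes degree $2$ (the edge $f$ can only vanish, and for a fixed configuration the test ``$(\bar G-c)-f$ connected'' can only turn from true to false as edges disappear).

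This suggests processing the graph one common vertex at a time, driven by a worklist. To process $c$, I compute the set $S$ of far endpoints of the degree-$2$ neighbours of $c$ together with the bridges of $\bar G-c$ in $O(m)$ time; the candidate edges are exactly the edges of $\bar G$ with both endpoints in $S$, and such an edge is redundant iff it is not needed to keep $\bar G-c$ connected. Contracting the components spanned by the non-candidate edges reduces the task to retaining a spanning tree among the candidate edges, so in $O(m)$ time I can delete, in a suitable order, a maximal set of candidate edges while preserving connectivity of $\bar G-c$, each single deletion being a redundant-edge deletion in the sense above; afterwards $c$ hosts no redundant configuration for its current degree-$2$ neighbours. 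Whenever a deletion drops some vertex $v$ to degree $2$, I place the (at most two) neighbours of $v$ on the worklist, since by the second observation these are the only common vertices at which new configurations can arise. By the first observation each vertex enters the worklist at most once initially and once per incident degree-$2$ event, so there are $O(n)$ processings in total, each costing $O(m)=O(n^2)$, yielding the claimed $O(n^3)$ bound. The main obstacle is exactly this bookkeeping: justifying that batching all redundant deletions for a fixed $c$ is legitimate (every removed edge is genuinely redundant at the moment it is removed, with edges incident to a degree-$2$ endpoint automatically failing the connectivity test), and that the degree-$2$ monotonicity really does bound the number of reprocessings.
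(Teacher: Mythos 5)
Your proposal is correct, but it takes a genuinely different route from the paper's. The paper computes the candidate list $L$ only once, in the \emph{original} graph $G$: it deletes in one batch all members of $L$ that are trivial ears of an open ear-decomposition (which leaves only $O(n)$ candidates, since the nontrivial ears carry at most $2n-2$ edges), then tests each survivor once with Lemma~\ref{lemma:redundant}. The price of this single static pass is a structural claim that you avoid entirely: the paper must prove that deleting edges of $L$ never \emph{creates} a new violating configuration, which it does by taking a hypothetical violation $(a,b,c,d,e)$ in $\bar G$, examining the last deleted edge incident to $a$ or $b$, and deriving a contradiction from the fact that edges incident to degree-2 vertices are never redundant. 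You handle newly created configurations dynamically instead: the worklist driven by the monotone ``vertex drops to degree 2'' events reprocesses exactly those common vertices $c$ at which new configurations can arise, and your amortized count ($n$ initial insertions plus at most two per degree-2 event, hence $O(n)$ processings of $O(m)$ time each) replaces the paper's ear-decomposition trick, with your per-vertex batching via bridges of $\bar G - c$ and contraction playing the role of the trivial-ear batch. Both arguments rest on the same two monotonicity facts---the connectivity test of Lemma~\ref{lemma:redundant} can only flip from true to false under deletions, and edges at degree-2 vertices are never deleted---and both reach $O(n^3)$; yours is self-contained (no ear-decomposition needed at this stage), while the paper's buys a simpler algorithm (one linear scan of a static list) at the cost of the ``no new violations'' argument. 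One point to make explicit in a write-up: if a batch deletion at $c$ drops a neighbour of $c$ itself to degree 2, the resulting configurations at $c$ are not covered by that batch; they are caught only because $c$ re-enters the worklist at that moment, so the invariant carrying final correctness is ``every vertex currently off the worklist hosts no redundant configuration,'' not the slightly stronger claim that processing $c$ leaves no redundant configuration at $c$.
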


\begin{proof}
Identifying all sets of five vertices $a,b,c,d,e$ with $\Gamma (a)=\{c,d\}$ und $\Gamma (b)=\{c,e\}$ and $f=\{d,e\}\in E(G)$
can be done in $O(n^2)$ time by enumerating all pairs of vertices of degree 2. Let $L$ be the set of such edges $f$.

Compute an open ear-decomposition of $G$. 
All edges of $L$ that are trivial ears are redundant and can be removed.
We now scan the $O(n)$ remaining edges in $L$ one by one. 
For each of them, we can check in $O(|E(G)|)$ time by Lemma \ref{lemma:redundant} whether it is redundant and, if so, delete it.

We claim that the resulting graph $\bar G$ satisfies property (P). Suppose not.
Let $a,b,c,d,e$ be five vertices violating property (P). 
Then $|\Gamma_G (a)|>2$ or $|\Gamma_G (b)|>2$ because otherwise $f=\{d,e\}$ would have been in $L$ and hence been deleted.
Consider the subgraph $G'$ immediately before the last time that an edge incident to $a$ or $b$ was deleted.
Say an edge $f'=\{d',e'\}$ was deleted with $d'=a$
(see Figure \ref{fig:redundant}(c)).
Then in particular there was a vertex $a'$ with $\Gamma(a')=\{c',d'\}$.
Then $a'$ is a neighbour of $d'=a$ that is different from $e'$, so $a'=d$ or $a'=c$.
But note that $a'=d$ is impossible because if $d$ has degree two, $f=\{d,e\}$ cannot be redundant.
So $a'=c$, and hence $c'=b$ and $b'=e$. Again this is impossible because if $e$ has degree two,  $f=\{d,e\}$ cannot be redundant.
\end{proof}

Hence, in the following, we may assume property (P).

\section{A new ear-decomposition \label{sec:neweardec}}

For a graph $G$ with property (P) we will compute an ear-decomposition with properties (E1)--(E7) below.
The edges in the nontrivial ears of this ear-decomposition will constitute a $\frac{10}{7}$-approximation for 2VC.
The first two of the following properties are the same as in \cite{cheriyan} (except that our ear-decomposition is open);
the other ones deal with nonpendant 3-ears.

\begin{enumerate}
\item[(E1)] The ear-decomposition is open and has $\varphi(G)$ even ears. 
\item[(E2)] Inner vertices of different pendant 3-ears are not adjacent in $G$.
\end{enumerate}
Moreover, for every nonpendant 3-ear $P$ and the first nontrivial ear $Q$ attached to $P$, say at $v$, where $E(P)=\{\{x, v\}, \{v,w\}, \{w, y\}\}$:
\begin{enumerate}
\item[(E3)] The other endpoint of $Q$ is $y$, and $y\not=x$.
\item[(E4)] If only pendant 3-ears and trivial ears are attached to $P$, and $E(Q)=\{\{v, v'\}, \{v',w'\},$ $\{w', y\}\}$, then
$w'$ has degree 2 or $\bigl( \Gamma (w')=\{y,v',v\} \text{ and } \Gamma (v')\subseteq\{v, w', y\} \bigr)$.
\item[(E5)] If $Q$ is a pendant 3-ear, then only pendant 3-ears and trivial ears are attached to $P$.
\item[(E6)] The vertex $w$ has degree 2 in $G$.
\item[(E7)] If $Q$ is a 2-ear, then the inner vertex of $Q$ has degree 2 in $G$.
\end{enumerate}

\begin{figure}	
\begin{center}
	\begin{tikzpicture}[thick,scale=0.9]
		\draw (-1,0) node {(a)};
		\node[circle, fill, inner sep=2, label=180:$x$] (x) at (0,0) {};
		\node[circle, fill=red, inner sep=2, label=180:$v$] (v) at (0,1) {};
		\node[circle, fill=red, inner sep=2, label=180:$w$] (w) at (0,2) {};
		\color{darkgreen}
		\node[label=180:$P$] (h) at (0,1.5) {};
		\color{black}
		\node[circle,fill, inner sep=2, label=180:$y$] (y) at (0,3) {};
		\draw[darkgreen] (x) to (v);
		\draw[darkgreen] (v) to (w);
		\draw[darkgreen] (w) to (y);

	\end{tikzpicture}
	\hfill
	\begin{tikzpicture}[thick,scale=0.9]
		\draw (-1,0) node {(b)};
		\node[circle, fill, inner sep=2, label=180:$y$] (a) at (0,3) {};
		\node[fill, inner sep=2, label=180:$w$] (b) at (0,2) {};
		\node[circle, fill, inner sep=2, label=180:$v$] (c) at (0,1) {};
		\node[circle, fill, inner sep=2, label=180:$x$] (d) at (0,0) {};
		\color{darkgreen}
		\node[label=180:$P$] (h) at (0,1.5) {};
		\color{black}
		\node[fill, inner sep=2] (e) at (1,2) {};
		\draw[darkgreen] (a) to (b);
		\draw[darkgreen] (b) to (c);
		\draw[darkgreen] (c) to (d);
		\draw[thick,scale=0.9, red] (a) to (e) to (c);
	\end{tikzpicture}
	\hfill
	\begin{tikzpicture}[thick,scale=0.9]
		\draw (-1,0) node {(c$'$)};
		\node[circle, fill, inner sep=2, label=180:$y$] (a) at (0,3) {};
		\node[fill, inner sep=2, label=180:$w$] (b) at (0,2) {};
		\node[circle, fill, inner sep=2, label=180:$v$] (c) at (0,1) {};
		\node[circle, fill, inner sep=2, label=180:$x$] (d) at (0,0) {};
		\color{darkgreen}
		\node[label=180:$P$] (h) at (0,1.5) {};
		\color{black}
		\node[circle, fill, inner sep=2] (e) at (1,1) {};
		\node[fill, inner sep=2] (f) at (1,2) {};
		\draw[darkgreen] (a) to (b);
		\draw[darkgreen] (b) to (c);
		\draw[darkgreen] (c) to (d);
		\draw[thick,scale=0.9, red] (a) to (f) to (e) to (c);
	\end{tikzpicture}
	\hfill
	\begin{tikzpicture}[thick,scale=0.9]
		\draw (-1.1,0) node {(c$''$)};
		\node[circle, fill, inner sep=2, label=180:$y$] (a) at (0,3) {};
		\node[fill, inner sep=2, label=180:$w$] (b) at (0,2) {};
		\node[circle, fill, inner sep=2, label=180:$v$] (c) at (0,1) {};
		\node[circle, fill, inner sep=2, label=180:$x$] (d) at (0,0) {};
		\color{darkgreen}
		\node[label=180:$P$] (h) at (0,1.5) {};
		\color{black}
		\node[fill, inner sep=2] (e) at (1,1) {};
		\node[circle, fill, inner sep=2] (f) at (1,2) {};
		\draw[darkgreen] (a) to (b);
		\draw[darkgreen] (b) to (c);
		\draw[darkgreen] (c) to (d);
		\draw[thick,scale=0.9, red] (a) to (f) to (e) to (c);
	\end{tikzpicture}
	\hfill
	\begin{tikzpicture}[thick,scale=0.9]
		\draw (-1.2,0) node {(c$'''$)};
		\node[circle, fill, inner sep=2, label=180:$x$] (x) at (0,0) {};
		\node[circle, fill, inner sep=2, label=180:$v$] (v) at (0,1) {};
		\node[fill, inner sep=2, label=180:$w$] (w) at (0,2) {};
		\node[circle,fill, inner sep=2, label=180:$y$] (y) at (0,3) {};
		\node[circle,fill=red, inner sep=2, label=0:$v'$] (v') at (1,1) {};
		\node[fill=red, inner sep=2, label=0:$w'$] (w') at (1,2) {};
		\node[circle,fill, inner sep=2] (y') at (y) {};
		\color{darkgreen}
		\node[label=180:$P$] (h) at (0,1.5) {};
		\color{black}
		\draw[darkgreen] (v) to (x);
		\draw[darkgreen] (v) to (w);
		\draw[red] (y) to (w');
		\draw[red] (v') to (v);
		\draw[red] (w') to (v');
		\draw[darkgreen] (w) to (y');
	\end{tikzpicture}
	\\[5mm]
	\begin{tikzpicture}[thick,scale=0.9]
		\draw (-1.3,0) node {(c$''''$)};
		\node[circle, fill, inner sep=2, label=180:$x$] (x) at (0,0) {};
		\node[circle, fill, inner sep=2, label=180:$v$] (v) at (0,1) {};
		\node[fill, inner sep=2, label=180:$w$] (w) at (0,2) {};
		\node[circle,fill, inner sep=2, label=180:$y$] (y) at (0,3) {};
		\node[fill=red, inner sep=2, label=0:$v'$] (v') at (1,1) {};
		\node[fill=red, inner sep=2, label=0:$w'$] (w') at (1,2) {};
		\color{darkgreen}
		\node[label=180:$P$] (h) at (0,1.5) {};
		\color{black}
		\node[circle,fill, inner sep=2] (y') at (y) {};
		\draw[thick, darkgreen] (v) to (x);
		\draw[thick, darkgreen] (v) to (w);
		\draw[thick, red] (y) to (w');
		\draw[thick, red] (v') to (v);
		\draw[thick, red] (w') to (v');
		\draw[thick, darkgreen] (w) to (y');
		\draw[thick] (w') to (v);
	\end{tikzpicture}
	\hfill\hspace*{-3mm}
	\begin{tikzpicture}[thick,scale=0.9]
		\draw (-1.4,0) node {(c$'''''$)};
		\node[circle, fill, inner sep=2, label=180:$x$] (x) at (0,0) {};
		\node[circle, fill, inner sep=2, label=180:$v$] (v) at (0,1) {};
		\node[fill, inner sep=2, label=180:$w$] (w) at (0,2) {};
		\node[circle,fill, inner sep=2, label=180:$y$] (y) at (0,3) {};
		\node[fill=red, inner sep=2, label=0:$v'$] (v') at (1,1) {};
		\node[fill=red, inner sep=2, label=0:$w'$] (w') at (1,2) {};
		\color{darkgreen}
		\node[label=180:$P$] (h) at (0,1.5) {};
		\color{black}
		\node[circle,fill, inner sep=2] (y') at (y) {};
		\draw[thick, darkgreen] (v) to (x);
		\draw[thick, darkgreen] (v) to (w);
		\draw[thick, red] (y) to (w');
		\draw[thick, red] (v') to (v);
		\draw[thick, red] (w') to (v');
		\draw[thick] (y') to (v');
		\draw[thick, darkgreen] (w) to (y');
		\draw[thick] (w') to (v);
	\end{tikzpicture}
	\hfill
	\begin{tikzpicture}[thick,scale=0.9]
		\draw (-1,0) node {(d)};
		\node[circle, fill, inner sep=2, label=180:$y$] (a) at (0,3) {};
		\node[fill, inner sep=2, label=180:$w$] (b) at (0,2) {};
		\node[circle, fill, inner sep=2, label=180:$v$] (c) at (0,1) {};
		\node[circle, fill, inner sep=2, label=180:$x$] (d) at (0,0) {};
		\color{darkgreen}
		\node[label=180:$P$] (h) at (0,1.5) {};
		\color{black}
		\draw[darkgreen] (a) to (b);
		\draw[darkgreen] (b) to (c);
		\draw[darkgreen] (c) to (d);
		\draw[thick, red] (a) to (0.9,3) to (1.4,2) to (0.9,1) to (c);
	\end{tikzpicture}
	\hfill
	\begin{tikzpicture}[thick,scale=0.9]
		\draw (-1,0) node {(e)};
		\node[circle, fill, inner sep=2, label=180:$y$] (a) at (0,3) {};
		\node[fill, inner sep=2, label=180:$w$] (b) at (0,2) {};
		\node[circle, fill, inner sep=2, label=180:$v$] (c) at (0,1) {};
		\node[circle, fill, inner sep=2, label=180:$x$] (d) at (0,0) {};
		\color{darkgreen}
		\node[label=180:$P$] (h) at (0,1.5) {};
		\color{black}
		\draw[darkgreen] (a) to (b);
		\draw[darkgreen] (b) to (c);
		\draw[darkgreen] (c) to (d);
		\draw[thick, red] (a) to (0.8,3) to (1.4,2.5) to (1.4,1.5) to (0.8,1) to (c);
	\end{tikzpicture}
	
\caption{\label{fig:3earcases}Possible 3-ears in an ear-decomposition with properties (E1)--(E7):
(a) $P$ is pendant; otherwise the first ear $Q$ (red) attached to $P$ (green) is 
(b) a 2-ear with inner vertex of degree 2, 
(c$'$) and (c$''$): a non-pendant 3-ear, 
(c$'''$) and (c$''''$) and (c$'''''$): a pendant 3-ear, 
(d): a 4-ear,
(e): an ear of length at least 5.
Again, vertices shown as squares have no other incident edges than shown.
Inner vertices of pendant ears are shown in red.}
	
\end{center}
 \end{figure}
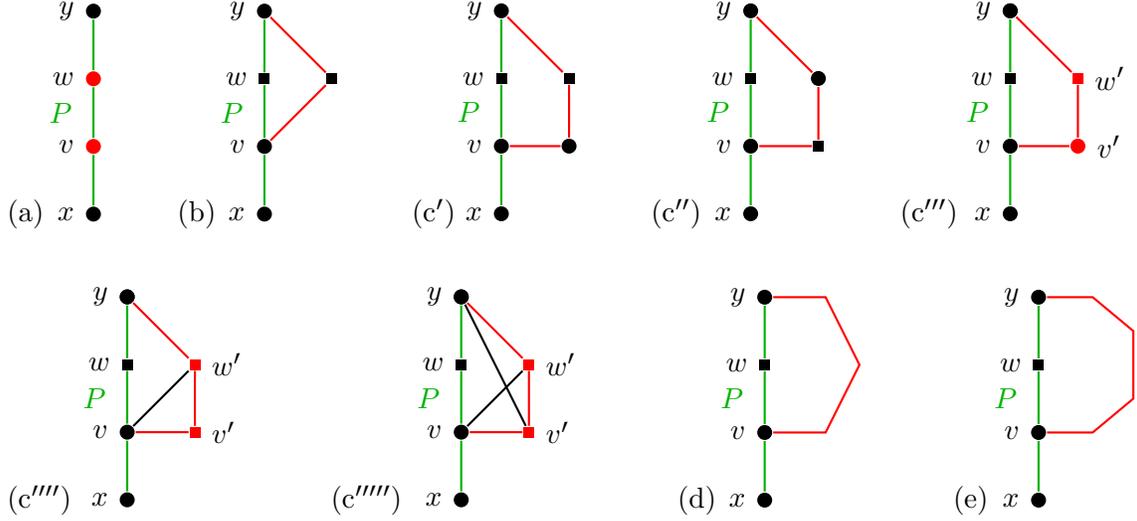

Conditions (E3)--(E7) say that every 3-ear has one of the properties shown in Figure \ref{fig:3earcases}.
The first ear can be a 3-ear only if it is pendant, i.e.\ only if $n=3$.

In the following we show how to compute such an ear-decomposition (if the graph has property (P)).
We get condition (E1) from Proposition \ref{prop:cheriyan} and maintain it throughout.
In the next section we deal with conditions (E2)--(E4). If one of them is violated, we can modify the ear-decomposition 
and increase the number of trivial ears.
Then, in Section \ref{sec:prop567}, we deal with the other conditions.

In any ear-decomposition there are $O(n)$ nontrivial ears, so we can check condition (E3)--(E7) in $O(n)$ time,
and condition (E2) in $O(n^2)$ time.

\section{How to obtain properties (E2), (E3), and (E4)}
	
We first deal with condition (E2).

\begin{lemma}
\label{lemma:prop2}
Let $G$ be a graph with property (P), and let an ear-decomposition of $G$ be given that
satisfies (E1) but not (E2).
Then we can compute in $O(n^2)$ time an ear-decomposition that satisfies (E1) and has more trivial ears.
\end{lemma}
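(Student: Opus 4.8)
The plan is to exhibit a local rearrangement that merges the two offending pendant $3$-ears into a single longer ear, trading one nontrivial ear for an extra trivial ear. First I would record the bookkeeping behind ``more trivial ears''. In any ear-decomposition every vertex other than the one in $P_0$ is an inner vertex of exactly one nontrivial ear, and each nontrivial $l$-ear has exactly $l-1$ inner vertices; hence the number of edges in nontrivial ears equals $(n-1)+(\text{number of nontrivial ears})$. Since $|E(G)|$ and $n$ are fixed, the number of trivial ears goes up precisely when the number of nontrivial ears goes down. So it suffices to decrease the number of nontrivial ears by one while creating no new even ear, so that (E1) is preserved.

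Because (E2) fails, there are two distinct pendant $3$-ears $P$ and $P'$ whose inner vertices are adjacent in $G$; choose the labelling $E(P)=\{\{x,v\},\{v,w\},\{w,y\}\}$ and $E(P')=\{\{x',v'\},\{v',w'\},\{w',y'\}\}$ so that $\{v,v'\}\in E(G)$. As $P$ and $P'$ are pendant, no nontrivial ear can contain $\{v,v'\}$ (such an ear would be attached to $P$ at $v\in\inn(P)$), so $\{v,v'\}$ is a trivial ear. I would then replace $P$, $P'$, and this trivial ear by the single $5$-ear $R$ with $E(R)=\{\{y,w\},\{w,v\},\{v,v'\},\{v',w'\},\{w',y'\}\}$, together with the two trivial ears $\{x,v\}$ and $\{x',v'\}$, inserting $R$ at the position of whichever of $P,P'$ comes later. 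Since both ears are pendant, no ear lying between them uses $v,w,v',w'$ as an endpoint, so delaying the introduction of these four vertices to $R$ is legitimate; the endpoints $y,y'$ of $R$ are already present at the insertion point, and the inner vertices of $R$ are exactly the vertices freed by deleting $P$ and $P'$. This replaces two odd ears by one odd ear, so the even-ear count is unchanged and (E1) survives, while the number of nontrivial ears drops by one, as desired. The whole manipulation touches $O(1)$ ears and is trivially carried out within the claimed $O(n^2)$ bound.

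The step I expect to be the main obstacle is verifying that $R$ is a genuine \emph{open} ear, i.e.\ that its two endpoints are distinct. Distinctness of $v,w,v',w'$ and the facts $y\notin\{v',w'\}$, $y'\notin\{v,w\}$ all follow from the ordering together with pendantness, since an equality such as $y'=v$ would exhibit $P'$ as a nontrivial ear attached to the pendant ear $P$. The genuinely delicate case is $y=y'$, in which $R$ degenerates into a closed ear and openness fails; the analysis above showed that the natural Hamiltonian path through $v,w,v',w'$ is forced to have both ends at $y$, so no reorientation rescues it. Here I would argue separately using property (P): when $w$ and $w'$ have degree $2$, taking $a=w$, $b=w'$, $c=y$, $d=v$, $e=v'$ shows that $\{v,v'\}$ can fail to be redundant only if $\{v,v'\}$ is a bridge of $G-y$, which pins down the local structure tightly and permits a bespoke re-decomposition (for instance relocating the short cycle through $v,w,v',w'$ to the front of the sequence as the unique closed ear $P_1$). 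Confirming that this degenerate rearrangement again preserves (E1) and strictly increases the number of trivial ears is where the real care is needed.
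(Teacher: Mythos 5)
Your bookkeeping argument and your main construction are sound: the count ``edges in nontrivial ears $=(n-1)+{}$number of nontrivial ears'' is correct, and merging the two pendant 3-ears through the trivial ear $\{v,v'\}$ into one open 5-ear is exactly what the paper does when $y\neq y'$ (its Case 1). The gap is in the case $y=y'$, which you only partially address and, in the part you do address, misread. In the subcase where both $w$ and $w'$ have degree 2, you correctly reduce via Lemma \ref{lemma:redundant} (with $a=w$, $b=w'$, $c=y$) to the statement that $\{v,v'\}$ must be a bridge of $G-y$, but you then assume this situation can occur and propose a ``bespoke re-decomposition''. In fact you already have the ingredients for a one-line contradiction: as you observed yourself, $\{v,v'\}$ is a trivial ear, so the union of the nontrivial ears is a 2-connected spanning subgraph of $G-\{v,v'\}$; hence $(G-y)-\{v,v'\}$ is connected, $\{v,v'\}$ is \emph{not} a bridge of $G-y$, and by Lemma \ref{lemma:redundant} it is redundant, contradicting property (P). So this subcase is vacuous, and no rearrangement is needed. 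Your fallback construction is moreover suspect on its own terms: relocating a cycle to the front as a new closed ear cannot preserve openness (an open ear-decomposition has only the single closed ear $P_1$) without a global restructuring you do not supply.

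The more serious omission is that when $y=y'$ and $w$ (or symmetrically $w'$) has degree at least 3, property (P) gives no contradiction at all --- Lemma \ref{lemma:redundant} requires both degree-2 hypotheses --- and your 5-ear genuinely degenerates into a closed ear, so you are left with no construction. This is where the paper does real work (its Cases 3 and 4): it takes a third neighbour $z$ of $w$ and, for each of the exhaustive possibilities $z\notin V(P_i)\cup V(P_j)$, $z=x$, $z=x'$, $z=v'$, $z=w'$, exhibits a concrete open 5-ear on the vertices $v,w,v',w'$ with two distinct endpoints, again increasing the number of trivial ears by one. Without these constructions your proof does not cover all instances in which (E2) fails, so as it stands the argument is incomplete.
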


\begin{proof}
Let $P_i$ and $P_j$ be pendant 3-ears with 
$E(P_i)=\{\{x, v\}, \{v,w\}, \{w, y\}\}$ and $E(P_j)= \{\{x', v'\}, \{v',w'\}, \{w', y'\}\}$ and 
$e=\{v,v'\}\in E(G)$. 

We consider several cases.
In each case we construct a new open pendant 5-ear $P'$ which can be added to the end of the ear-decomposition, before the trivial ears.
$P_i$ and $P_j$ will be removed, and the number of trivial ears increases by 1.

\case{Case 1} $y\neq y'$.\\
Let $P'$ be the 5-ear consisting of the $y$-$v$-path in $P_i$, the edge $\{v,v'\}$, and the $v'$-$y'$-path in $P_j$ (Figure \ref{fig:prop2}(a)).

\case{Case 2} $y=y'$ and $|\delta (w)|=|\delta (w')|=2$.\\
As $e$ is a trivial ear in the ear-decomposition, $G-e$ is 2-connected, so $(G-e)-y$ is connected. 
By Lemma \ref{lemma:redundant} this means that $e$ is redundant, violating property (P). 
Therefore this case cannot happen (Figure \ref{fig:prop2}(b)).

\case{Case 3} $y=y'$ and $|\delta (w)|>2$.\\
		\bfseries Case 3.1: \mdseries  There is a $z\in \Gamma (w)\backslash (V(P_i)\cup V(P_j))$:\\
		Let $P'$ be a 5-ear with edges $\{z,w\}, \{w, v\}, \{v, v'\}, \{v', w'\},$ and $\{w', y\}$ (Figure \ref{fig:prop2}(c)).\\
		\bfseries Case 3.2: \mdseries $x\in \Gamma (w)$:\\
		Let $P'$ be a 5-ear with edges $\{x, w\}, \{w, v\}, \{v, v'\}, \{v', w'\},$ and $\{w', y\}$ (Figure \ref{fig:prop2}(d)).\\
		\bfseries Case 3.3: \mdseries $x'\in \Gamma (w)$:\\
		Let $P'$ be a 5-ear with edges $\{x', w\}, \{w, v\}, \{v, v'\}, \{v', w'\},$ and $\{w', y\}$ (Figure \ref{fig:prop2}(e)).\\
		\bfseries Case 3.4: \mdseries $v'\in \Gamma (w)$:\\
		Let $P'$ be a 5-ear with edges $\{x, v\}, \{v, w\}, \{w, v'\}, \{v', w'\},$ and $\{w', y\}$ (Figure \ref{fig:prop2}(f)).\\
		\bfseries Case 3.5: \mdseries $w'\in \Gamma (w)$:\\
		Let $P'$ be a 5-ear with edges $\{x, v\}, \{v, v'\}, \{v', w'\}, \{w', w\},$ and $\{w, y\}$ (Figure \ref{fig:prop2}(g)).
		
\case{Case 4} $y=y'$ and $|\delta (w')|>2$. Symmetric to Case 3.
\end{proof}

\begin{figure}
\begin{center}

	\hfill
	\begin{tikzpicture}[thick,scale=0.8]
		\draw (-1,0) node {(a)};
		\node[circle,fill, inner sep=2, label=180:$x$] (x) at (0,0) {};
		\node[circle,fill=red, inner sep=2, label=180:$v$] (v) at (0,1) {};
		\node[circle,fill=red, inner sep=2, label=180:$w$] (w) at (0,2) {};
		\node[circle,fill, inner sep=2, label=180:$y$] (y) at (0,3) {};
		\node[circle,fill, inner sep=2, label=0:$x'$] (x') at (2,0) {};
		\node[circle,fill=red, inner sep=2, label=0:$v'$] (v') at (2,1) {};
		\node[circle,fill=red, inner sep=2, label=0:$w'$] (w') at (2,2) {};
		\node[circle,fill, inner sep=2, label=0:$y'$] (y') at (2,3) {};
		\color{darkgreen}
%		\node[label=180:$P_i$] (h) at (0,1.5) {};
		\color{blue}
%		\node[label=0:$P_j$] (g) at (2, 1.5) {};
		\color{black}
		\draw[thick, darkgreen, dashed] (v) to (x);
		\draw[thick, darkgreen] (v) to (w);
		\draw[thick, darkgreen] (y) to (w);
		\draw[thick] (v') to node[above] {$e$} (v);
		\draw[thick, blue] (w') to (v');
		\draw[thick, dashed, blue] (x') to (v');
		\draw[thick, blue] (w') to (y');
	\end{tikzpicture}
	\hfill
	\begin{tikzpicture}[thick,scale=0.8]
		\draw (-1,0) node {(b)};
		\node[circle,fill, inner sep=2, label=180:$x$] (x) at (0,0) {};
		\node[circle,fill=red, inner sep=2, label=180:$v$] (v) at (0,1) {};
		\node[fill=red, inner sep=2, label=180:$w$] (w) at (0,2) {};
		\node[circle,fill, inner sep=2, label=180:$y$] (y) at (0,3) {};
		\node[circle,fill, inner sep=2, label=0:$x'$] (x') at (2,0) {};
		\node[circle,fill=red, inner sep=2, label=0:$v'$] (v') at (2,1) {};
		\node[fill=red, inner sep=2, label=0:$w'$] (w') at (2,2) {};
		\color{darkgreen}
%		\node[label=180:$P_i$] (h) at (0,1.5) {};
		\color{blue}
%		\node[label=0:$P_j$] (g) at (2, 1.5) {};
		\color{black}
		\node[circle,fill, inner sep=2] (y') at (y) {};
		\draw[thick, darkgreen] (v) to (x);
		\draw[thick, darkgreen] (v) to (w);
		\draw[thick, darkgreen] (y) to (w);
		\draw[thick] (v') to node[above] {$e$} (v);
		\draw[thick, blue] (w') to (v');
		\draw[thick, blue] (x') to (v');
		\draw[thick, blue] (w') to (y');
	\end{tikzpicture}
	\hfill
	\begin{tikzpicture}[thick,scale=0.8]
		\draw (-1,0) node {(c)};
		\node[circle,fill, inner sep=2, label=180:$x$] (x) at (0,0) {};
		\node[circle,fill=red, inner sep=2, label=180:$v$] (v) at (0,1) {};
		\node[circle,fill=red, inner sep=2, label=0:$w$] (w) at (0,2) {};
		\node[circle,fill, inner sep=2, label=180:$y$] (y) at (0,3) {};
		\node[circle,fill, inner sep=2, label=0:$x'$] (x') at (2,0) {};
		\node[circle,fill=red, inner sep=2, label=0:$v'$] (v') at (2,1) {};
		\node[circle,fill=red, inner sep=2, label=0:$w'$] (w') at (2,2) {};
		\node[circle,fill, inner sep=2] (y') at (y) {};
		\node[circle, fill, inner sep=2, label=180:$z$] (z) at (-1,2) {};
		\color{darkgreen}
%		\node[label=180:$P_i$] (h) at (0,1.5) {};
		\color{blue}
%		\node[label=0:$P_j$] (g) at (2, 1.5) {};
		\color{black}
		\draw[thick, dashed, darkgreen] (v) to (x);
		\draw[thick, darkgreen] (v) to (w);
		\draw[thick, dashed, darkgreen] (y) to (w);
		\draw[thick] (v') to (v);
		\draw[thick, blue] (w') to (v');
		\draw[thick, dashed, blue] (x') to (v');
		\draw[thick, blue] (w') to (y');
		\draw[thick] (w) to (z);
	\end{tikzpicture}
	\hfill\,\\[5mm]
	\begin{tikzpicture}[thick,scale=0.8]
		\draw (-1,0) node {(d)};
		\node[circle,fill, inner sep=2, label=180:$x$] (x) at (0,0) {};
		\node[circle,fill=red, inner sep=2, label=-45:$v$] (v) at (0,1) {};
		\node[circle,fill=red, inner sep=2, label=180:$w$] (w) at (0,2) {};
		\node[circle,fill, inner sep=2, label=180:$y$] (y) at (0,3) {};
		\node[circle,fill, inner sep=2, label=0:$x'$] (x') at (2,0) {};
		\node[circle,fill=red, inner sep=2, label=0:$v'$] (v') at (2,1) {};
		\node[circle,fill=red, inner sep=2, label=0:$w'$] (w') at (2,2) {};
		\color{darkgreen}
%		\node[label=0:$P_i$] (h) at (0,1.5) {};
		\color{blue}
%		\node[label=0:$P_j$] (g) at (2, 1.5) {};
		\color{black}
		\node[circle,fill, inner sep=2] (y') at (y) {};
		\draw[thick, dashed, darkgreen] (v) to (x);
		\draw[thick, darkgreen] (v) to (w);
		\draw[thick, dashed, darkgreen] (y) to (w);
		\draw[thick] (v') to (v);
		\draw[thick, blue] (w') to (v');
		\draw[thick, dashed, blue] (x') to (v');
		\draw[thick, blue] (w') to (y');
		\draw[thick, bend right] (w) to (x);
	\end{tikzpicture}
	\hfill
	\begin{tikzpicture}[thick,scale=0.8]
		\draw (-1,0) node {(e)};
		\node[circle,fill, inner sep=2, label=180:$x$] (x) at (0,0) {};
		\node[circle,fill=red, inner sep=2, label=180:$v$] (v) at (0,1) {};
		\node[circle,fill=red, inner sep=2, label=180:$w$] (w) at (0,2) {};
		\node[circle,fill, inner sep=2, label=180:$y$] (y) at (0,3) {};
		\node[circle,fill, inner sep=2, label=0:$x'$] (x') at (2,0) {};
		\node[circle,fill=red, inner sep=2, label=0:$v'$] (v') at (2,1) {};
		\node[circle,fill=red, inner sep=2, label=0:$w'$] (w') at (2,2) {};
		\color{darkgreen}
%		\node[label=180:$P_i$] (h) at (0,1.5) {};
		\color{blue}
%		\node[label=0:$P_j$] (g) at (2, 1.5) {};
		\color{black}
		\node[circle,fill, inner sep=2] (y') at (y) {};
		\draw[thick, dashed, darkgreen] (v) to (x);
		\draw[thick, darkgreen] (v) to (w);
		\draw[thick, dashed, darkgreen] (y) to (w);
		\draw[thick] (v') to (v);
		\draw[thick, blue] (w') to (v');
		\draw[thick, dashed, blue] (x') to (v');
		\draw[thick, blue] (w') to (y');
		\draw[thick] (w) to (x');
	\end{tikzpicture}
	\hfill
	\begin{tikzpicture}[thick,scale=0.8]
		\draw (-1,0) node {(f)};
		\node[circle,fill, inner sep=2, label=180:$x$] (x) at (0,0) {};
		\node[circle,fill=red, inner sep=2, label=180:$v$] (v) at (0,1) {};
		\node[circle,fill=red, inner sep=2, label=180:$w$] (w) at (0,2) {};
		\node[circle,fill, inner sep=2, label=180:$y$] (y) at (0,3) {};
		\node[circle,fill, inner sep=2, label=0:$x'$] (x') at (2,0) {};
		\node[circle,fill=red, inner sep=2, label=0:$v'$] (v') at (2,1) {};
		\node[circle,fill=red, inner sep=2, label=0:$w'$] (w') at (2,2) {};
		\node[circle,fill, inner sep=2] (y') at (y) {};
		\color{darkgreen}
%		\node[label=180:$P_i$] (h) at (0,1.5) {};
		\color{blue}
%		\node[label=0:$P_j$] (g) at (2, 1.5) {};
		\color{black}
		\draw[thick, darkgreen] (v) to (x);
		\draw[thick, darkgreen] (v) to (w);
		\draw[thick, dashed, darkgreen] (y) to (w);
		\draw[thick, dashed] (v') to (v);
		\draw[thick, blue] (w') to (v');
		\draw[thick, dashed, blue] (x') to (v');
		\draw[thick, blue] (w') to (y');
		\draw[thick] (w) to (v');
	\end{tikzpicture}
	\hfill
	\begin{tikzpicture}[thick,scale=0.8]
		\draw (-1,0) node {(g)};
		\node[circle,fill, inner sep=2, label=180:$x$] (x) at (0,0) {};
		\node[circle,fill=red, inner sep=2, label=180:$v$] (v) at (0,1) {};
		\node[circle,fill=red, inner sep=2, label=180:$w$] (w) at (0,2) {};
		\node[circle,fill, inner sep=2, label=180:$y$] (y) at (0,3) {};
		\node[circle,fill, inner sep=2, label=0:$x'$] (x') at (2,0) {};
		\node[circle,fill=red, inner sep=2, label=0:$v'$] (v') at (2,1) {};
		\node[circle,fill=red, inner sep=2, label=0:$w'$] (w') at (2,2) {};
		\color{darkgreen}
%		\node[label=180:$P_i$] (h) at (0,1.5) {};
		\color{blue}
%		\node[label=0:$P_j$] (g) at (2, 1.5) {};
		\color{black}
		\node[circle,fill, inner sep=2] (y') at (y) {};
		\draw[thick, darkgreen] (v) to (x);
		\draw[thick, dashed, darkgreen] (v) to (w);
		\draw[thick, darkgreen] (y) to (w);
		\draw[thick] (v') to (v);
		\draw[thick, blue] (w') to (v');
		\draw[thick, dashed, blue] (x') to (v');
		\draw[thick, blue, dashed] (w') to (y');
		\draw[thick] (w') to (w);
	\end{tikzpicture}

\caption{\label{fig:prop2}Proof of Lemma \ref{lemma:prop2}.
Two adjacent pendant 3-ears $P_i$ (green) and $P_j$ (blue). Black edges were old trivial ears. Dashed edges form new trivial ears.}
	
\end{center}
\end{figure}
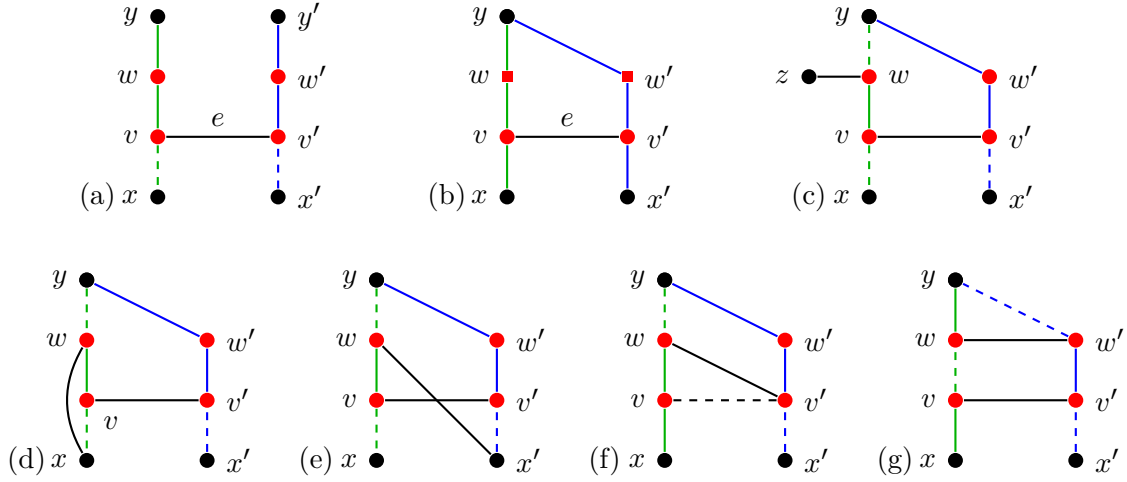

Now we consider condition (E3).
	
\begin{lemma}
\label{lemma:prop3}
Given an ear-decomposition of $G$ that satisfies (E1) but not (E3),
we can compute in $O(n)$ time an ear-decomposition that satisfies (E1) and has more trivial ears.
\end{lemma}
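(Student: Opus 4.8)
The plan is to follow the rerouting strategy of Lemma~\ref{lemma:prop2}: whenever (E3) fails, I replace the two offending ears by one longer open ear plus one new trivial ear, preserving all length parities so that (E1) is maintained while the number of trivial ears goes up by one.

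First I would fix notation. Let $P$ be a nonpendant $3$-ear with $E(P)=\{\{x,v\},\{v,w\},\{w,y\}\}$, and let $Q$ be the first nontrivial ear attached to $P$; orienting $P$ suitably, I may assume $Q$ is attached at $v$. Because $P$ is nonpendant it is not the first ear, hence it is open and $x\neq y$, so the only way (E3) can fail is that the second endpoint $q$ of $Q$ satisfies $q\neq y$. The endpoints of the open ear $Q$ are distinct, so $q\neq v$; and since no nontrivial ear attached to $\inn(P)$ precedes $Q$, the vertex $q$ lies in $\{x,w\}\cup(V(G)\setminus V(P))$. I would then distinguish two cases. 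If $q\neq w$, I replace $P$ and $Q$ by the open ear formed from the edges $\{y,w\}$ and $\{w,v\}$ followed by $Q$ (traversed from $v$ to $q$), and let $\{x,v\}$ become a trivial ear. If $q=w$, then $Q$ is a $v$-$w$ path and $\{v,w\}\cup E(Q)$ is a cycle carrying the two pendant edges $\{x,v\}$ and $\{w,y\}$; here I replace $P$ and $Q$ by the open ear formed from $\{x,v\}$, then $Q$ (from $v$ to $w$), then $\{w,y\}$, and let $\{v,w\}$ become trivial.

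In both cases the new long ear has length $|E(Q)|+2$, so it is even exactly when $Q$ is; thus the number of even ears is unchanged, (E1) is preserved, and the single leftover edge is a new trivial ear, so the count of trivial ears increases by one. The hard part will be checking that the construction is a \emph{valid} open ear-decomposition, i.e.\ that the new ears can be placed so that their endpoints already occur and their inner vertices are new. When $q\in\{x,w\}$ both endpoints of the long ear ($y$ and $q$, resp.\ $x$ and $y$) already occur before $P$, so the long ear can be inserted at the position of $P$; one only moves the inner vertices of $Q$ and the trivial ears hanging on $\inn(P)$ accordingly, which is harmless because the only ears strictly between $P$ and $Q$ that touch $\inn(P)$ are trivial and none of them touches $\inn(Q)$. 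The genuinely delicate situation is $q\notin V(P)$, where $q$ may have been created by an ear lying between $P$ and $Q$, so the long ear cannot sit at $P$'s old position. I would resolve this by first reordering the decomposition, moving $P$ together with the trivial ears attached to it to the position immediately before $Q$ and leaving all other ears in their relative order; this is legitimate precisely because those intervening ears avoid $\inn(P)$. After the reordering $q$ already occurs, the long ear becomes insertable, and a final check confirms that every ear after $Q$ still attaches to vertices introduced earlier. The whole procedure inspects only the $O(n)$ ears near the single violation and performs one local surgery, hence runs in $O(n)$ time.
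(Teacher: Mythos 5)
Your constructions in the two cases you do treat coincide with the paper's (replace the edge of $P$ joining $Q$'s endpoints by $Q$ when $q\in\{x,w\}$; concatenate $Q$ with the $v$--$y$ path of $P$ when $q\notin V(P)$), and your placement and parity arguments there are sound. But there is a genuine gap at the start: the claim ``because $P$ is nonpendant it is not the first ear, hence it is open and $x\neq y$'' is false. Nonpendant means some nontrivial ear is attached at an inner vertex of $P$, and the first (closed) ear can perfectly well be nonpendant; indeed, if the first ear of an open ear-decomposition is a 3-ear and $n>3$, the second nontrivial ear \emph{must} attach at one of its two inner vertices (an open ear needs two distinct endpoints among the three existing vertices), so the first ear is then automatically nonpendant and violates (E3) because $y=x$. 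This case is therefore not exotic --- it arises, e.g., whenever the decomposition from Proposition~\ref{prop:cheriyan} starts with a triangle --- and it is exactly the case the paper flags with ``this will always be the case if $P$ is the first (closed) ear.'' Your case analysis also misses the failure mode in which $q=y$ but $y=x$, which occurs only in this situation.

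Concretely, when $P=P_1$ is closed ($x=y$), your surgery still makes combinatorial sense but produces a \emph{closed} ear (e.g., in your Case 2 the ear $\{x,v\}+Q+\{w,y\}$ is a circuit through $x=y$), contradicting your assertion that the new ear is open; your verification that ``both endpoints already occur before $P$'' is vacuous here since nothing precedes $P_1$ except $P_0$. The repair is easy --- the new closed ear can take the position of $P_1$, which is the one slot where a closed ear is permitted, and the parity bookkeeping is unchanged, so (E1) survives --- but your proof as written neither notices this case nor performs that check, so it does not establish the lemma in the very situation the algorithm most needs it.
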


\begin{proof}
Let $P$ be a nonpendant 3-ear with $E(P)=\{\{x,v\}, \{v,w\}, \{w,y\}\}$ such that the first nontrivial ear $Q$ attached to $P$ 
has $v$ as an endpoint, but the other endpoint of $Q$ is not $y$.

If the other endpoint of $Q$ is $w$ or $x$ (this will always be the case if $P$ is the first (closed) ear), 
let $e$ be the edge of $P$ that connects the two endpoints of $Q$.
We can modify $P$ by replacing $e$ by $Q$. The new ear is even if and only if $Q$ is. 
The edge $e$ becomes a new trivial ear, and $Q$ vanishes
(Figure \ref{fig:prop3}, (a) and (b))

If the other endpoint of $Q$ does not belong to $V(P)$, we can replace $P$ and $Q$ by a new ear 
consisting of $Q$ and the $v$-$y$-path in $P$. This new ear is even if and only if $Q$ is, and can be put at the position of $Q$ in the
ear-decomposition. The edge $\{v,x\}$ becomes a new trivial ear (Figure \ref{fig:prop3}(c)).
\end{proof}
	
\begin{figure}
\begin{center}
	\begin{tikzpicture}[]
		\draw (-1,0) node {(a)};
		\node[circle, fill, inner sep=2, label=180:$x$] (a) at (0,0) {};
		\node[circle, fill, inner sep=2, label=180:$v$] (b) at (0,1) {};
		\node[circle,fill, inner sep=2, label=180:$w$] (c) at (0,2) {};
		\node[circle,fill, inner sep=2, label=180:$y$] (d) at (0,3) {};
		\color{red}
		\node[label=0:$Q$] (q) at (1,1.5) {};
		\color{darkgreen}
		\node[label=180:$P$] (h) at (0,1.5) {};
		\color{black}
		\draw[thick, darkgreen] (a) to (b);
		\draw[thick, darkgreen, dashed] (b) to node[right] {$e$} (c);
		\draw[thick, darkgreen] (c) to (d);
		\draw[thick, red] (b) to (1,1) to (1,2) to (c);
	\end{tikzpicture}
	\hfill
	\begin{tikzpicture}[]
		\draw (-1,0) node {(b)};	
		\node[circle, fill, inner sep=2, label=180:$x$] (a) at (0,0) {};
		\node[circle, fill, inner sep=2, label=180:$v$] (b) at (0,1) {};
		\node[circle,fill, inner sep=2, label=180:$w$] (c) at (0,2) {};
		\node[circle,fill, inner sep=2, label=180:$y$] (d) at (0,3) {};
		\color{red}
		\node[label=0:$Q$] (q) at (1,0.5) {};
		\color{darkgreen}
		\node[label=180:$P$] (h) at (0,1.5) {};
		\color{black}
		\draw[thick, darkgreen, dashed] (a) to node[right] {$e$} (b);
		\draw[thick,darkgreen] (b) to (c);
		\draw[thick, darkgreen] (c) to (d);
		\draw[thick, red] (b) to (1,1) to (1,0) to (a);
	\end{tikzpicture}
	\hfill
	\begin{tikzpicture}[]
		\draw (-1,0) node {(c)};	
		\node[circle, fill, inner sep=2, label=180:$x$] (a) at (0,0) {};
		\node[circle, fill, inner sep=2, label=180:$v$] (b) at (0,1) {};
		\node[circle,fill, inner sep=2, label=180:$w$] (c) at (0,2) {};
		\node[circle,fill, inner sep=2, label=180:$y$] (d) at (0,3) {};
		\node[circle,fill, inner sep=2] (e) at (1,2) {};
		\color{red}
		\node[label=0:$Q$] (q) at (1,1.5) {};
		\color{darkgreen}
		\node[label=180:$P$] (h) at (0,1.5) {};
		\color{black}
		\draw[thick, darkgreen, dashed] (a) to (b);
		\draw[thick,darkgreen] (b) to (c);
		\draw[thick, darkgreen] (c) to (d);
		\draw[thick, red] (b) to (1,1) to (e);
	\end{tikzpicture}
	\caption{\label{fig:prop3}Proof of Lemma \ref{lemma:prop3}. 
	The nonpendant 3-ear $P$ (green) and the first ear $Q$ attached to it (red) violate condition (E3).
	The dashed edge becomes a trivial ear in the new ear-decomposition.}

\end{center}
\end{figure}
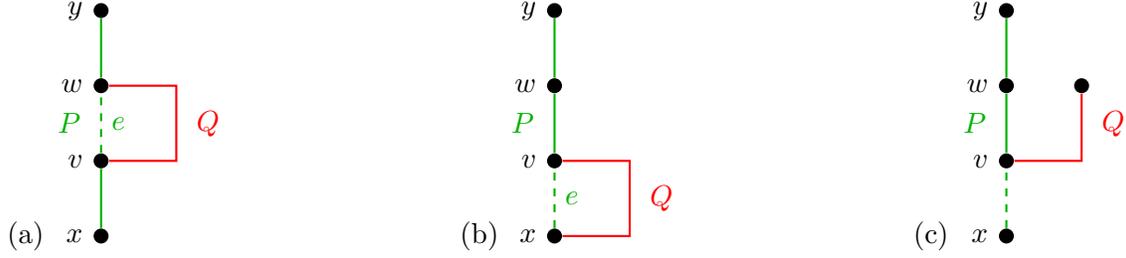	
		
Now we turn to (E4).	
	
\begin{lemma}
\label{lemma:prop4}
Let $G$ be a graph with property (P), and let an ear-decomposition of $G$ be given that
satisfies (E1), (E2) and (E3), but not (E4).
Then we can compute in $O(n)$ time an ear-decomposition that satisfies (E1) and has more trivial ears.
\end{lemma}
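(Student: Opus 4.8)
The plan is to repair a violation of (E4) by merging the nonpendant $3$-ear $P$ with the offending pendant $3$-ear $Q$ into a single open $5$-ear, so that two former ear-edges are released as new trivial ears while only one previously trivial edge is consumed; the net effect is one additional trivial ear. Writing $E(P)=\{\{x,v\},\{v,w\},\{w,y\}\}$ and, by (E3), $E(Q)=\{\{v,v'\},\{v',w'\},\{w',y\}\}$, the ear $Q$ is a pendant $3$-ear attached to $P$ at $v$ and at $y$. First I would record the key observation that every edge incident to $v'$ or $w'$ other than the three edges of $Q$ is a \emph{trivial} ear, since $v'$ and $w'$ are inner vertices of the pendant ear $Q$. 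A violation of (E4) means that $w'$ has degree at least $3$ while the good alternative fails, and I would split this into the two exhaustive cases (A)~$\Gamma(w')\neq\{v,v',y\}$, i.e.\ $w'$ has a neighbour $z\notin\{v,v',y\}$, and (B)~$\Gamma(w')=\{v,v',y\}$ but $v'$ has a neighbour $z'\notin\{v,w',y\}$.

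In each case I would build an open $5$-ear $P'$ on the inner vertices $v,w,v',w'$ using the extra edge(s). In Case~(A) the canonical choice is the path $y$--$w$--$v$--$v'$--$w'$--$z$, which uses the trivial edge $\{w',z\}$ and releases $\{x,v\}$ and $\{w',y\}$ as trivial ears. In Case~(B) the triangle edge $\{v,w'\}$ together with $\{v',z'\}$ yields the path $z'$--$v'$--$w'$--$v$--$w$--$y$, which consumes the two trivial edges $\{v,w'\}$ and $\{v',z'\}$ and releases the three unused base edges $\{x,v\}$, $\{v,v'\}$, $\{w',y\}$. In both cases every ear involved (the two deleted $3$-ears, the new $5$-ear, and all $1$-ears) is odd, so the number of even ears is unchanged and $P'$ is a genuine open ear with distinct endpoints; hence (E1) is preserved, and a short count shows that the number of trivial ears increases by exactly one, as required.

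The delicate point is the position of $P'$ in the sequence, because $P'$ reintroduces $v,w,v',w'$, which may carry further ears attached to $P$. When the extra neighbour is $x$ or $w$ I would instead route the $5$-ear between the two old endpoints $x$ and $y$ (for instance $x$--$v$--$v'$--$w'$--$w$--$y$ when $\{w,w'\}\in E(G)$, or $x$--$w'$--$v'$--$v$--$w$--$y$ when $\{x,w'\}\in E(G)$), so that both endpoints precede $P$ and $P'$ can simply take the position of $P$. The genuinely new difficulty, and the step I expect to be the main obstacle, is the case where the extra neighbour ($z$ in Case~(A) or $z'$ in Case~(B)) is a fresh vertex, so that one endpoint of $P'$ is not known to precede $P$.

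This is exactly where (E2) is needed. Since $Q$ is the first nontrivial ear attached to $P$, all nontrivial ears attached to $P$ come after $Q$ and are, by hypothesis, pendant $3$-ears; by (E2) their inner vertices are not adjacent to the inner vertices $v',w'$ of $Q$. I would use this to argue that the fresh vertex, being a neighbour of $w'$ (resp.\ $v'$), is not an inner vertex of any pendant $3$-ear attached to $P$, and that the ear introducing it has no endpoint in $\{v,w,v',w'\}$. Hence there is no circular dependency: I may place $P'$ just after the fresh endpoint first appears and defer all ears attached to $P$ at $v$ or $w$ (all pendant $3$-ears or trivial) to after $P'$, obtaining a valid open ear-decomposition that still has $\varphi(G)$ even ears. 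Property~(P) is available throughout and, as in the excluded Case~2 of Lemma~\ref{lemma:prop2}, can be invoked to discard any degenerate configuration in which the relevant vertices all have degree $2$ and the putative extra edge would be redundant.
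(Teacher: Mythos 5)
Your proposal is correct and takes essentially the same route as the paper: your $5$-ear constructions (the generic paths $y$--$w$--$v$--$v'$--$w'$--$z$ and $z'$--$v'$--$w'$--$v$--$w$--$y$, plus the rerouting between $x$ and $y$ when the extra neighbour is $w$) are exactly the paper's Cases 1.1, 1.2, 2.1 and 2.2, your trivial-ear count is the same, and your use of (E2) together with the pendancy of $Q$ to rule out a circular dependency is precisely the paper's placement argument. The only difference is bookkeeping: the paper first reorders the decomposition so that after $P$ only the (pendant) ears attached to $P$ and trivial ears remain and then puts $P'$ at $P$'s old position, whereas you keep the order, insert $P'$ after the ear introducing the fresh endpoint, and defer the pendant $3$-ears attached to $P$; the two reorderings are equivalent.
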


\begin{proof}
Let $P$ be a 3-ear with $E(P)=\{\{x,v\},\{v,w\},\{w,y\}\}$ such that only pendant 3-ears and trivial ears are attached to $P$.
Let $Q$ be the first nontrivial ear attached to it, and $E(Q)=\{\{v, v'\}, \{v',w'\}, \{w', y\}\}$. Suppose $w'$ has degree more than 2.

Then property (E4) requires $\Gamma (w')=\{y,v',v\}$ and $\Gamma (v')\subseteq\{v, w', y\}$, so suppose this does not hold.

Moreover, we may assume that after $P$ there are only the ears attached to $P$ (because these are pendant) and trivial ears in the ear-decomposition.

\case{Case 1} There exists a $z\in\Gamma (w')\backslash\{y,v',v\}$:\\
		\bfseries Case 1.1: \mdseries $z=w$:\\
		Then let $P'$ be a 5-ear with edges $\{x,v\}, \{v,v'\}, \{v',w'\}, \{w',w\},$ and $\{w,y\}$ (Figure \ref{fig:prop4}(a)).\\
		\bfseries Case 1.2: \mdseries $z\neq w$:\\
		Then let $P'$ be a 5-ear with edges $\{y,w\}, \{w,v\}, \{v,v'\}, \{v',w'\},$ and $\{w',z\}$ (Figure \ref{fig:prop4}(b)).
		
\case{Case 2} $\Gamma (w')=\{y,v',v\}$ and there exists a $z\in \Gamma (v')\backslash\{v,w',y\}$:\\
		\bfseries Case 2.1: \mdseries $z=w$:\\
		Then let $P'$ be a 5-ear with edges $\{x,v\}, \{v,w\}, \{w,v'\}, \{v',w'\},$ and $\{w',y\}$ (Figure \ref{fig:prop4}(c)).\\
		\bfseries Case 2.2: \mdseries $z\neq w$:\\
		Then let $P'$ be a 5-ear with edges $\{y,w\}, \{w,v\}, \{v,w'\}, \{w',v'\},$ and $\{v',z\}$ (Figure \ref{fig:prop4}(d)).

We replace the 3-ears $P$ and $Q$ by the new open 5-ear $P'$.
Due to property (E2), $z$ in Case 1 and Case 2 cannot be an inner vertex of a pendant 3-ear, so we can put $P'$ at the position of $P$ (or $Q$) in the ear-decomposition.
In each case there is one trivial ear more than before.
\end{proof}
	
\begin{figure}
\begin{center}	
	\begin{tikzpicture}
		\draw (-1,0) node {(a)};	
		\node[circle, fill, inner sep=2, label=180:$x$] (x) at (0,0) {};
		\node[circle, fill, inner sep=2, label=180:$v$] (v) at (0,1) {};
		\node[circle, fill, inner sep=2, label=180:$w$] (w) at (0,2) {};
		\node[circle,fill, inner sep=2, label=180:$y$] (y) at (0,3) {};
		\node[circle,fill=red, inner sep=2, label=270:$v'$] (v') at (1,1) {};
		\node[circle, fill=red, inner sep=2, label=90:$w'$] (w') at (1,2) {};
		\node[circle,fill, inner sep=2] (y') at (y) {};
		\color{darkgreen}
		\node[label=180:$P$] (h) at (0.1,1.5) {};
		\color{red}
		\node[label=180:$Q$] (h) at (1.2,1.5) {};
		\color{black}
		\draw[thick, darkgreen] (v) to (x);
		\draw[thick, darkgreen, dashed] (v) to (w);
		\draw[thick, red, dashed] (y') to (w');
		\draw[thick, red] (v') to (v);
		\draw[thick, red] (w') to (v');
		\draw[thick, darkgreen] (w) to (y);
		\draw[thick] (w) to (w');
	\end{tikzpicture}
	\hfill
	\begin{tikzpicture}
		\draw (-1,0) node {(b)};	
		\node[circle, fill, inner sep=2, label=180:$x$] (x) at (0,0) {};
		\node[circle, fill, inner sep=2, label=180:$v$] (v) at (0,1) {};
		\node[circle, fill, inner sep=2, label=180:$w$] (w) at (0,2) {};
		\color{darkgreen}
		\node[label=180:$P$] (h) at (0.1,1.5) {};
		\color{red}
		\node[label=180:$Q$] (h) at (1.2,1.5) {};
		\color{black}
		\node[circle,fill, inner sep=2, label=180:$y$] (y) at (0,3) {};
		\node[circle,fill=red, inner sep=2, label=270:$v'$] (v') at (1,1) {};
		\node[circle,fill=red, inner sep=2, label=90:$w'$] (w') at (1,2) {};
		\node[circle,fill, inner sep=2] (y') at (y) {};
		\node[circle, fill, inner sep=2, label=90:$z$] (z) at (1.7,2.7) {};
		\draw[thick, darkgreen, dashed] (v) to (x);
		\draw[thick, darkgreen] (v) to (w);
		\draw[thick, red, dashed] (y) to (w');
		\draw[thick, red] (v') to (v);
		\draw[thick, red] (w') to (v');
		\draw[thick, darkgreen] (w) to (y');
		\draw[thick] (w') to (z);
	\end{tikzpicture}
	\hfill	
%	\begin{tikzpicture}
%		\draw (-1,0) node {(c)};	
%		\node[circle, fill, inner sep=2, label=0:{$x\!=\!z$}] (x) at (0,0) {};
%		\node[circle, fill, inner sep=2, label=180:$v$] (v) at (0,1) {};
%		\node[circle, fill, inner sep=2, label=180:$w$] (w) at (0,2) {};
%		\color{darkgreen}
%		\node[label=180:$P$] (h) at (0.1,1.5) {};
%		\color{red}
%		\node[label=0:$Q$] (h) at (0.8,1.5) {};
%		\color{black}
%		\node[circle,fill, inner sep=2, label=180:$y$] (y) at (0,3) {};
%		\node[circle,fill=red, inner sep=2, label=270:$v'$] (v') at (1,1) {};
%		\node[circle,fill=red, inner sep=2, label=90:$w'$] (w') at (1,2) {};
%		\node[circle,fill, inner sep=2] (y') at (y) {};
%		\draw[thick, darkgreen, dashed] (v) to (x);
%		\draw[thick, darkgreen] (v) to (w);
%		\draw[thick, red, dashed] (y) to (w');
%		\draw[thick, red] (v') to (v);
%		\draw[thick, red] (w') to (v');
%		\draw[thick, darkgreen] (w) to (y');
%		\draw[thick] (w') to (x);
%	\end{tikzpicture}
%	\hfill
	\begin{tikzpicture}
		\draw (-1,0) node {(c)};	
		\node[circle, fill, inner sep=2, label=180:$x$] (x) at (0,0) {};
		\node[circle, fill, inner sep=2, label=180:$v$] (v) at (0,1) {};
		\color{darkgreen}
		\node[label=180:$P$] (h) at (0.1,1.5) {};
		\color{red}
		\node[label=0:$Q$] (h) at (0.8,1.5) {};
		\color{black}
		\node[circle, fill, inner sep=2, label=180:$w$] (w) at (0,2) {};
		\node[circle,fill, inner sep=2, label=180:$y$] (y) at (0,3) {};
		\node[circle,fill=red, inner sep=2, label=270:$v'$] (v') at (1,1) {};
		\node[fill=red, inner sep=2, label=90:$w'$] (w') at (1,2) {};
		\node[circle,fill, inner sep=2] (y') at (y) {};
		\node[circle, fill, inner sep=2] (z) at (w) {};
		\draw[thick, darkgreen] (v) to (x);
		\draw[thick, darkgreen] (v) to (w);
		\draw[thick, red] (y) to (w');
		\draw[thick, red, dashed] (v') to (v);
		\draw[thick, red] (w') to (v');
		\draw[thick] (z) to (v');
		\draw[thick, darkgreen, dashed] (w) to (y');
		\draw[thick, dashed] (w') to (v);
	\end{tikzpicture}
	\hfill
	\begin{tikzpicture}
		\draw (-1,0) node {(d)};	
		\node[circle, fill, inner sep=2, label=180:$x$] (x) at (0,0) {};
		\node[circle, fill, inner sep=2, label=180:$v$] (v) at (0,1) {};
		\color{darkgreen}
		\node[label=180:$P$] (h) at (0.1,1.5) {};
		\color{red}
		\node[label=0:$Q$] (h) at (0.8,1.5) {};
		\color{black}
		\node[circle, fill, inner sep=2, label=180:$w$] (w) at (0,2) {};
		\node[circle,fill, inner sep=2, label=180:$y$] (y) at (0,3) {};
		\node[circle,fill=red, inner sep=2, label=270:$v'$] (v') at (1,1) {};
		\node[fill=red, inner sep=2, label=90:$w'$] (w') at (1,2) {};
		\node[circle,fill, inner sep=2] (y') at (y) {};
		\node[circle, fill, inner sep=2, label=270:$z$] (z) at (1.7,0.3) {};
		\draw[thick, darkgreen, dashed] (v) to (x);
		\draw[thick, darkgreen] (v) to (w);
		\draw[thick, red, dashed] (y) to (w');
		\draw[thick, dashed, red] (v') to (v);
		\draw[thick, red] (w') to (v');
		\draw[thick] (z) to (v');
		\draw[thick, darkgreen] (w) to (y');
		\draw[thick] (w') to (v);
\end{tikzpicture}
		\caption{\label{fig:prop4}Proof of Lemma \ref{lemma:prop4}. 
	The nonpendant 3-ear $P$ (green) and the first ear $Q$ attached to it (red) violate condition (E4).
	$Q$ is a pendant 3-ear. Black edges are trivial ears. The dashed edges become trivial ears in the new ear-decomposition.
	Again, squares indicate vertices that have no other incident edges than those shown. In (b) and (d), $z=x$ is possible.}

\end{center}
\end{figure}
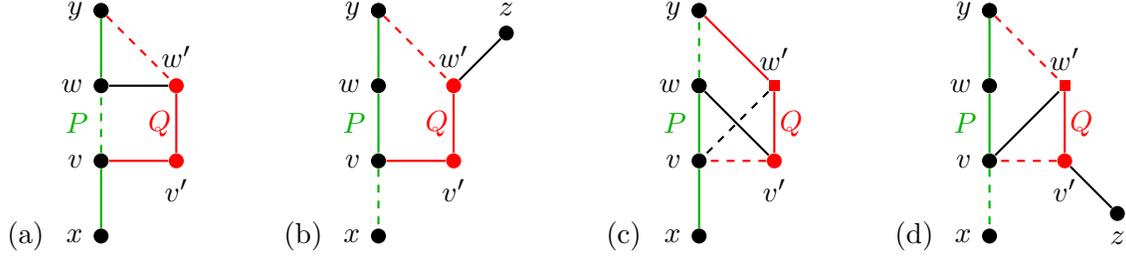

\section{How to obtain properties (E5), (E6), and (E7) \label{sec:prop567}}

In the following lemma we show how to obtain properties (E5), (E6) and (E7) simultaneously, maintaining (E1).
The number of trivial ears will not decrease.

\begin{lemma}
\label{lemma:prop567}
Given an ear-decomposition of $G$ that satisfies (E1),
we can compute in $O((j+1)n^2)$ time an ear-decomposition that satisfies (E1), (E5), (E6), and (E7),
and in which the number of trivial ears is $j$ more than before, where $j\ge 0$.
\end{lemma}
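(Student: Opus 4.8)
The plan is to enforce (E5), (E6) and (E7) by a sequence of local modifications, each centered on a single nonpendant 3-ear $P$ with $E(P)=\{\{x,v\},\{v,w\},\{w,y\}\}$ and its first attached nontrivial ear $Q$, in the same spirit as Lemmas~\ref{lemma:prop2}--\ref{lemma:prop4}. A useful invariant to record at the outset is that the total number of ears is fixed: every edge of $G$ lies in exactly one ear and the inner vertices partition $V(G)\setminus V(P_0)$, so each ear satisfies $|E(P_i)|-|\inn(P_i)|=1$ and hence the number of ears equals $|E(G)|-(n-1)$. Consequently ``creating a trivial ear'' is the same as ``reducing the number of nontrivial ears''. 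I would maintain (E1) throughout by only ever replacing a set of ears by new open ears of the same total length whose number of even ears does not increase; I also make sure that no modification ever decreases the number of trivial ears, as asserted in the statement preceding the lemma.

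Property (E5) I would obtain essentially by reordering. Among the nontrivial ears attached to $P$, if one of them is not a pendant 3-ear, I reorder the decomposition so that such an ear becomes the first nontrivial ear attached to $P$; this is legal because an ear attached to $P$ may be listed immediately after the ears containing its endpoints, and it changes neither the number of trivial ears nor the number of even ears. After this step either the first attached ear $Q$ is not a pendant 3-ear, or every nontrivial ear attached to $P$ is a pendant 3-ear and a trivial ear, which is exactly (E5).

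For (E6) and (E7) I would run a case analysis driven by the edge witnessing a vertex of degree at least $3$. If $w$ has a neighbour $z\notin\{v,y\}$ (violating (E6)), or if $Q$ is a $2$-ear whose inner vertex $q$ has a neighbour outside $Q$ (violating (E7)), I reroute $P$ and $Q$ through that extra edge. Depending on whether the extra neighbour lies on $P\cup Q$, on an earlier ear, or elsewhere, I either build one longer open ear and split off a trivial ear (increasing the number of trivial ears, exactly as in the figures accompanying Lemmas~\ref{lemma:prop2}--\ref{lemma:prop4}), or I perform a neutral swap that absorbs the extra edge into an ear and exchanges which inner vertex plays the role of the degree-$2$ vertex. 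Property~(P) is invoked precisely as in Case~2 of Lemma~\ref{lemma:prop2} to rule out the configurations in which no legal reroute exists, since any such stuck configuration would exhibit a redundant edge of the forbidden form.

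The time bound and termination come from a two-level potential. The trivial-ear-increasing moves reduce the number of nontrivial ears, which is at most $n-1$, so there are exactly $j$ of them and they account for the factor $(j+1)$. Between two such moves the neutral moves must be shown to terminate: I would assign a secondary potential, for instance the number of inner vertices of nonpendant $3$-ears and of attached $2$-ears having degree greater than $2$ plus the number of (E5)-violations, bounded by $O(n)$, and argue that each neutral move strictly decreases it. Each modification is local and is carried out in $O(n)$ time, and one scan for a violation costs $O(n)$, so a phase between two trivial-ear-increasing moves performs $O(n)$ neutral moves and costs $O(n^2)$, giving $O((j+1)n^2)$ overall. The main obstacle is the design of the case analysis so that all three properties hold \emph{simultaneously} at the end: a reroute enforcing (E6) or (E7) at one 3-ear must not reintroduce a violation at a neighbouring 3-ear in a way that cycles, and its interaction with the reordering used for (E5) must be controlled. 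Verifying case by case that every move strictly decreases the secondary potential while never decreasing the number of trivial ears, and that property~(P) closes off the otherwise-stuck cases, is where the real work lies.
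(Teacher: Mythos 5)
Your plan coincides with the paper's proof in two of its three steps: (E5) is indeed obtained by pure reordering (moving the pendant 3-ears attached to $P$ behind a non-pendant-3-ear attached ear), and an (E7) violation is indeed handled by a swap of the roles of the 2-ear $Q$ and the 3-ear $P$ (the paper replaces $P,Q$ by a 3-ear through $v'$ and a 2-ear through $w$, which converts the (E7) violation into an (E6) violation). The genuine gap is in your treatment of (E6), which is the heart of the lemma. When $w$ has an extra neighbour $z$, the edge $\{w,z\}$ lies on some ear $R$ attached to $P$ at $w$, and the other endpoint $u$ of $R$ (or $z$ itself) may be an \emph{inner vertex of a later ear}, arbitrarily deep in the decomposition. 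No local reroute of $P$, $Q$ and that single edge can produce a valid open ear in this case, because the endpoints of the new ear must lie on earlier ears. The paper's key construction is global: it grows a path $S$ starting with $R$ and repeatedly walking from the current endpoint $a$ (an inner vertex of some ear $P'$ of smaller index) to an endpoint $b$ of $P'$, choosing the even-length piece of $P'$ whenever possible, until $S$ reaches the set $X=V(P_0)\cup\cdots\cup V(P_i)$; then $P_i$ (and possibly $Q$) is replaced by one long open ear containing all of $S$, the edges of $S$ are deleted from the ears they came from, and one edge of $P_i$ becomes a new trivial ear. The parity bookkeeping (even pieces preferred, so odd ears stay odd) is exactly what preserves (E1). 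This construction, and the case distinction on whether $S$ ends in $X\setminus\{v,y\}$, in $v$, or in $y$, is absent from your proposal. Your fallback — that property (P) rules out ``stuck'' configurations — is unfounded: the paper's proof of this lemma never uses property (P) (the lemma assumes only (E1)), and there are no stuck configurations once the path construction is available; moreover (P) asserts non-redundancy of certain edges, which gives information about optimum solutions, not about the existence of reroutes.

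Your termination argument also fails for the moves that are actually required. Deleting the edges of $S$ from later ears shortens them, and a shortened ear can become a brand-new 3-ear whose middle vertex has large degree, or a 2-ear creating a new (E7) situation; so your secondary potential (the count of high-degree inner vertices of bad 3-ears and 2-ears plus the number of (E5) violations) can strictly \emph{increase} in a single legitimate step. The paper uses a different, lexicographic progress measure: each step either increases the number of trivial ears (in particular whenever the intermediate re-application of Lemma~\ref{lemma:prop3} is triggered), or keeps it constant while strictly increasing the \emph{index of the first} ear violating (E5), (E6) or (E7) — this works because all newly created violations sit at larger indices, and the one place where an earlier ear $P_h$ can be hurt is an (E5) violation, which is repaired immediately by reordering. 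That measure is what bounds each phase by $n-1$ iterations of $O(n)$ work and yields $O((j+1)n^2)$; with your potential the phase length is unbounded as argued.
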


\begin{proof}
Let $P_i$ be the first nonpendant 3-ear that violates at least one of the conditions (E5), (E6), or (E7). 
We will perform changes to the ear-decomposition, maintaining (E1), such that afterwards 
the number of trivial ears increases or this number remains constant but the first
ear violating one of the conditions (E5), (E6), or (E7) has a larger index.

At the beginning and after every modification of the ear-decomposition we apply Lemma \ref{lemma:prop3} 
in order to ensure that the ear-decomposition satisfies condition (E3).

Let $Q$ be the first nontrivial ear attached to $P_i$. 
Let $E(P_i)=\{\{x,v\}, \{v,w\}, \{w,y\}\}$, where $v$ and $y$ are the endpoints of $Q$.
We will proceed with the following steps.

\bigskip
\case{Step 1: if $P_i$ violates (E5)} 
$Q$ is a pendant 3-ear, but there is a nontrivial ear $Q'$ attached to $P$ that is not a pendant 3-ear.

We can then move $Q$ and all other pendant 3-ears attached to $P$ to the position after $Q'$.
Note that this may create a new violation of (E7), but we will deal with this in Step 3.
 
\bigskip
\case{Step 2. if $P_i$ violates (E6)} 
$w$ has degree more than 2.

Let $X:=\bigcup_{j=0}^{i} V(P_j)$.
Let $R$ be the first ear attached to $P$ at $w$ (possibly trivial) and $u$ the other endpoint of $R$.
Set $S:=R$ and $a:=u$, and consider the following procedure (see Figure \ref{fig:prop567}(a) for an example):

\begin{algorithm}
		$S:=R$\;
		$a:=u$\;
		\While{$a\notin X$}{
			let $P'$ be the ear with $a\in\inn(P')$\;
			let $b$ be an endpoint of $P'\!$, if possible such that the $a$-$b$-path $T$ in $P'$ has even length\;
			$a:=b$\;
			$S:=S\cup T$\;
		}
\end{algorithm}
		
This procedure terminates because the index of $P'$ decreases in every iteration. For the same reason, $S$ is always a path;
it ends in $X\backslash\{w\}$ because $R$ is the first ear attached to $P$ at $w$.

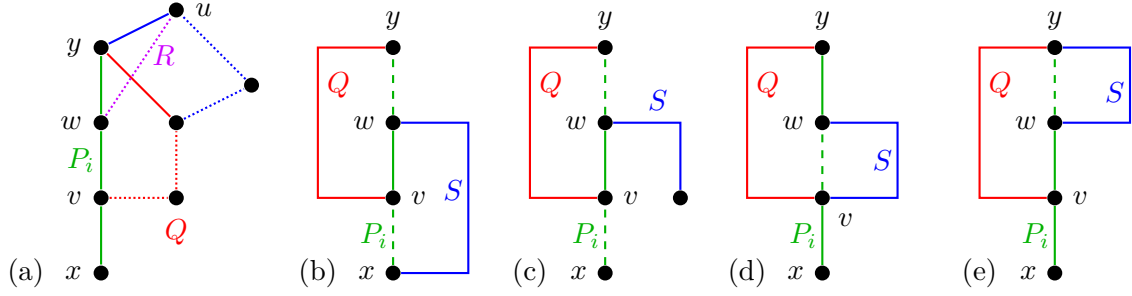
\begin{figure}
\begin{center}
		\begin{tikzpicture}[scale=1.0,thick]
			\draw (-1,0) node {(a)};	
			\node[fill, circle, inner sep=2, label=180:$x$] (x) at (0,0) {};
			\node[fill, circle, inner sep=2, label=180:$v$] (v) at (0,1) {};
			\node[fill, circle, inner sep=2, label=180:$w$] (w) at (0,2) {};
			\node[fill, circle, inner sep=2, label=180:$y$] (y) at (0,3) {};
			\node[fill, circle, inner sep=2] (a) at (1,2) {};
			\node[fill, circle, inner sep=2] (b) at (1,1) {};
			\node[fill, circle, inner sep=2, label=0:$u$] (c) at (1,3.5) {};
			\node[fill, circle, inner sep=2] (d) at (2,2.5) {};
			\color{violet}
			\node[label=0:$R$] (k) at (0.4,2.9) {};
			\color{darkgreen}
			\node[label=180:$P_i$] (h) at (0.2,1.5) {};
			\color{red}
			\node[label=270:$Q$] (i) at (1,1) {};
			\color{black}
			\draw[thick, darkgreen] (x) to (v) to (w) to (y);
			\draw[thick, red, densely dotted] (v) to (b) to (a);
			\draw[thick, red] (a) to (y);
			\draw[thick, blue, densely dotted] (a) to (d) to (c);
			\draw[thick, blue] (c) to (y);
			\draw[thick, violet, densely dotted] (w) to (c);
		\end{tikzpicture}
		\hfill
		\begin{tikzpicture}[]
			\draw (-1,0) node {(b)};	
			\node[circle, fill, inner sep=2, label=180:$x$] (a) at (0,0) {};
			\node[circle, fill, inner sep=2, label=0:$v$] (b) at (0,1) {};
			\node[circle,fill, inner sep=2, label=180:$w$] (c) at (0,2) {};
			\node[circle,fill, inner sep=2, label=90:$y$] (d) at (0,3) {};
			\color{red}
			\node[label=180:$Q$] (q) at (-0.3,2.5) {};
			\color{blue}
			\node[label=0:$S$] (e) at (0.4,1.1) {};
			\color{darkgreen}
			\node[label=180:$P_i$] (h) at (0.2,0.5) {};
			\color{black}
			\draw[thick, dashed, darkgreen] (a) to (b);
			\draw[thick, blue] (a) to (1,0) to (1,2) to (c);
			\draw[thick, darkgreen] (b) to (c);
			\draw[thick, dashed, darkgreen] (c) to (d);
			\draw[thick, red] (b) to (-1,1) to (-1,3) to (d);
		\end{tikzpicture}
		\hfill
		\begin{tikzpicture}[]
			\draw (-1,0) node {(c)};	
			\node[circle, fill, inner sep=2, label=180:$x$] (a) at (0,0) {};
			\node[circle, fill, inner sep=2, label=0:$v$] (b) at (0,1) {};
			\node[circle,fill, inner sep=2, label=180:$w$] (c) at (0,2) {};
			\node[circle,fill, inner sep=2, label=90:$y$] (d) at (0,3) {};
			\node[circle,fill, inner sep=2] (e) at (1,1) {};
			\color{darkgreen}
			\node[label=180:$P_i$] (h) at (0.2,0.5) {};
			\color{blue}
			\node[label=90:$S$] (q) at (0.7,1.9) {};
			\color{red}
			\node[label=180:$Q$] (s) at (-0.3,2.5) {};
			\color{black}
			\draw[thick, darkgreen, dashed] (a) to (b);
			\draw[thick, darkgreen] (b) to (c);
			\draw[thick, dashed, darkgreen] (c) to (d);
			\draw[thick, blue] (c) to (1,2) to(e);
			\draw[thick, red] (b) to (-1,1) to (-1,3)  to (d);
		\end{tikzpicture}
		\hfill
		\begin{tikzpicture}[]
			\draw (-1,0) node {(d)};	
			\node[circle, fill, inner sep=2, label=180:$x$] (a) at (0,0) {};
			\node[circle, fill, inner sep=2, label=315:$v$] (b) at (0,1) {};
			\node[circle,fill, inner sep=2, label=180:$w$] (c) at (0,2) {};
			\node[circle,fill, inner sep=2, label=90:$y$] (d) at (0,3) {};
			\color{blue}
			\node[label=180:$S$] (q) at (1.2,1.5) {};
			\color{darkgreen}
			\node[label=180:$P_i$] (h) at (0.2,0.5) {};
			\color{red}
			\node[label=180:$Q$] (s) at (-0.3,2.5) {};
			\color{black}
			\draw[thick, darkgreen] (a) to (b);
			\draw[thick, darkgreen, dashed] (b) to (c);
			\draw[thick, darkgreen] (c) to (d);
			\draw[thick, blue] (b) to (1,1) to (1,2) to (c);
			\draw[thick, red] (b) to (-1,1) to (-1,3) to (d);
		\end{tikzpicture}
		\hfill
		\begin{tikzpicture}[]
			\draw (-1,0) node {(e)};	
			\node[circle, fill, inner sep=2, label=180:$x$] (a) at (0,0) {};
			\node[circle, fill, inner sep=2, label=0:$v$] (b) at (0,1) {};
			\node[circle,fill, inner sep=2, label=180:$w$] (c) at (0,2) {};
			\node[circle,fill, inner sep=2, label=90:$y$] (d) at (0,3) {};
			\color{blue}
			\node[label=90:$S$] (q) at (0.8,2) {};
			\color{red}
			\node[label=180:$Q$] (s) at (-0.3,2.5) {};
			\color{darkgreen}
			\node[label=180:$P_i$] (h) at (0.2,0.5) {};
			\color{black}
			\draw[thick, darkgreen] (a) to (b);
			\draw[thick, darkgreen] (b) to (c);
			\draw[thick, darkgreen, dashed] (c) to (d);
			\draw[thick, blue] (c) to (1,2) to (1,3) to (d);
			\draw[thick, red] (b) to (-1,1) to (-1,3) to (d);
		\end{tikzpicture}		
		
\caption{\label{fig:prop567}Proof of Lemma \ref{lemma:prop567}: Condition (E6) is violated. $P_i$ (green) is a nonpendant 3-ear,
$Q$ (red) is the first ear attached to it (at $v$), and an ear $R$ has endpoint $w$.	
(a) Example of the the computation of $S$ (dotted edges), with $R$ in purple and another ear in blue.
In (b)--(e), $S$ is shown in blue, and the resulting trivial ears are dashed.}

\end{center}
\end{figure}

In the following we will create a new open ear that contains all of $S$.
Every ear (except $R$) that was used for the construction of $S$, i.e., that played the role of $P'$ in the above procedure,
has a part in $S$ and another part outside $S$. 
We will remove the edges in $S$ from these ears.
If these ears were odd, they remain odd.	
Note that $Q$ can be one of these ears (then it is the last one), as in Figure \ref{fig:prop567}(a).
		
\case{Case 1} $S$ ends in $X\backslash\{v, y\}$.

Then $Q$ and $S$ are disjoint. Replace $P_i$ and $Q$ by an ear that consists of $S$, $\{w,v\}$ and $Q$ 
(see Figure \ref{fig:prop567}, (b) and (c)). Remove the edges in $S$ from all other ears.
The edges $\{y,w\}$ and $\{v,x\}$ become trivial ears.

\case{Case 2} $S$ ends in $v$.

Then $S$ has more than one edge. 
Replace $P_i$ by an ear that consists of $\{x,v\}$, $S$, and $\{w,y\}$ 
(see Figure \ref{fig:prop567}(d), but again note that $S$ can contain part of $Q$ like in Figure \ref{fig:prop567}(a)). 
Remove the edges in $S$ from all other ears.
The edge $\{v,w\}$ becomes a trivial ear.

\case{Case 3} $S$ ends in $y$.

Then again $S$ has more than one edge. 
Replace $P_i$ by an ear that consists of $\{x,v\}, \{v,w\}$, and $S$ 
(see Figure \ref{fig:prop567}(e), but note that $S$ can contain part of $Q$). Remove the edges in $S$ from all other ears.
The edge $\{w,y\}$ becomes a trivial ear.

Note that in each of the three cases the new ear is open and has more than three edges and thus does not violate conditions (E5), (E6), or (E7).
We can put it at the position of $P_i$.
However, it may be attached to an ear $P_h$ with $h<i$ to which also an earlier pendant 3-ear is attached, now violating (E5).
In this case we immediately apply Step 1 to $P_h$.

We may also have created new 3-ears that violate one of the conditions (E5), (E6), or (E7), but they come later in the ear-decomposition.
The only possible new even ear is the one we designed from $S$. 
But then $R$ was even or one of the ears part of which belongs to $S$ changes from even to odd. So (E1) is maintained.

The (possiby trivial) ear $R$ vanishes, but at least one edge of $P_i$ becomes a new trivial ear,
so the number of trivial ears does not decrease.

\bigskip
\case{Step 3.  if $P_i$ satsifies (E6) but violates (E7)} 
$Q$ is a 2-ear with inner vertex $v'$ of degree at least three.

Replace $P_i$ and $Q$ by the ears $P'$ and $Q'$, where $E(P')=\{\{y,v'\}, \{v',v\}, \{v,x\}\}$ and $E(Q')=\{\{v,w\}, \{w,y\}\}$ 
(see Figure \ref{fig:prop567b}). This will violate condition (E6). Now apply Step 2 to $P'$.

\begin{figure}
\begin{center} 
		\begin{tikzpicture}[]
			\draw (-1,0) node {(a)};	
			\node[circle, fill, inner sep=2, label=0:$v'$] (a) at (2,0) {};
			\node[ fill, inner sep=2, label=90:$w$] (b) at (2,1) {};
			\node[circle,fill, inner sep=2, label=90:$v$] (c) at (1,1) {};
			\node[circle,fill, inner sep=2, label=90:$y$] (d) at (3,1) {};
			\node[circle,fill, inner sep=2, label=90:$x$] (e) at (0,1) {};
			\color{darkgreen}
			\node[label=90:$P_i$] (h) at (1.5,0.8) {};
			\color{red}
			\node[label=270:$Q$] (h) at (1.5,0.6) {};
			\color{black}
			\draw[thick, red] (a) to (d);
			\draw[thick, red] (a) to (c);
			\draw[thick, darkgreen] (e) to (c) to (b) to (d);
		\end{tikzpicture}
		\hspace{3cm}
		\begin{tikzpicture}[]
			\draw (-1,0) node {(b)};	
			\node[circle, fill, inner sep=2, label=0:$v'$] (a) at (2,0) {};
			\node[ fill, inner sep=2, label=90:$w$] (b) at (2,1) {};
			\node[circle,fill, inner sep=2, label=90:$v$] (c) at (1,1) {};
			\node[circle,fill, inner sep=2, label=90:$y$] (d) at (3,1) {};
			\node[circle,fill, inner sep=2, label=90:$x$] (e) at (0,1) {};
			\color{darkgreen}
			\node[label=270:$P'$] (h) at (1.45,0.6) {};
			\color{red}
			\node[label=90:$Q'$] (h) at (1.5,0.8) {};
			\color{black}
			\draw[thick, darkgreen] (a) to (d);
			\draw[thick, darkgreen] (a) to (c);
			\draw[thick,red] (c) to (b);
			\draw[thick,red] (d) to (b);
			\draw[thick, darkgreen] (c) to (e);
		\end{tikzpicture} 
\caption{\label{fig:prop567b}Proof of Lemma \ref{lemma:prop567}: Condition (E7) is violated. (a) $P_i$ (green) is a nonpendant 3-ear,
the 2-ear $Q$ (red) is the first ear attached to it. Here $w$ has degree 2 but $v'$ has larger degree. (b) The new ears $P'$ (green) and $Q'$ (red).}

\end{center}
\end{figure}
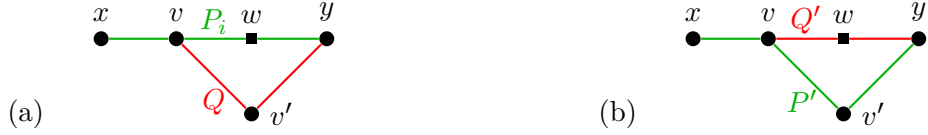
	
We now consider the running time.	
Each of Step 1, 2, and 3 takes $O(n)$ time.
As soon as (E3) is violated for some ear, we apply Lemma \ref{lemma:prop3}, which increases the number of trivial ears.
While this does not happen, after $i$ iterations of Steps 1, 2, and 3, the first $i$ ears satisfy (E5), (E6), and (E7). 
So after at most $n-1$ iterations we are either done or increase the number of trivial ears.
\end{proof}

\begin{corollary}
\label{cor:main}
Given a graph $G$ with property (P), one can compute an ear-decomposition of $G$ with properties (E1)--(E7) in $O(n^3)$ time.
\end{corollary}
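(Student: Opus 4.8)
The plan is to assemble the ear-decomposition by chaining the preceding results: Proposition~\ref{prop:cheriyan} installs (E1), the repair lemmas \ref{lemma:prop2}, \ref{lemma:prop3} and \ref{lemma:prop4} install (E2), (E3) and (E4), and Lemma~\ref{lemma:prop567} installs (E5), (E6) and (E7). I would organise this as a loop and control it with a single monovariant, the number of trivial ears.

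First I would run Proposition~\ref{prop:cheriyan} to obtain an open ear-decomposition with $\varphi(G)$ even ears; this gives (E1) in $O(n^3)$ time, and (E1) is preserved by every later step. The loop then alternates two phases. In the \emph{repair phase} I check (E2), (E3), (E4) in this priority order and, whenever the first violated one is found, apply the corresponding Lemma~\ref{lemma:prop2}, \ref{lemma:prop3} or \ref{lemma:prop4}; the priority order guarantees the preconditions of the invoked lemma, in particular that Lemma~\ref{lemma:prop4} is applied only when (E2) and (E3) currently hold. Each such application preserves (E1) and strictly increases the number of trivial ears, so the repair phase ends after finitely many steps with (E2)--(E4) all satisfied. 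I then enter the \emph{completion phase} and apply Lemma~\ref{lemma:prop567} to obtain (E5)--(E7) while preserving (E1). Since Lemma~\ref{lemma:prop567} promises only to preserve (E1) and may re-violate (E2)--(E4), I loop back to the repair phase; I stop precisely when a completion phase leaves (E2)--(E4) intact, at which point (E1)--(E7) all hold together.

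The heart of the argument, both for termination and for the running time, is the bound that the total number of trivial ears ever created is only $O(n)$. The decomposition from Proposition~\ref{prop:cheriyan} already has all but at most $2(n-1)$ edges as trivial ears: every nontrivial ear $P$ satisfies $|E(P)|\le 2\,|\inn(P)|$, and the inner vertex sets of the nontrivial ears partition the $n-1$ vertices outside $P_0$, so the nontrivial ears together cover at most $2(n-1)$ edges. Since the number of trivial ears never decreases and never exceeds $|E(G)|$, its total increase over the whole algorithm is at most $2(n-1)=O(n)$. Hence the repair lemmas are applied $O(n)$ times in total (each raises the count by one), and Lemma~\ref{lemma:prop567} is invoked $O(n)$ times with $\sum_i j_i=O(n)$; by its $O((j+1)n^2)$ running time the completion phases together cost $O(n^3)$. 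The repair phases cost $O(n^2)$ per step and thus $O(n^3)$ in total, and the condition checks are subsumed in these bounds. Together with the initial $O(n^3)$ this yields the claimed $O(n^3)$.

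The step I expect to be the main obstacle is controlling the interaction of the two phases: because Lemma~\ref{lemma:prop567} may undo (E2)--(E4), I must rule out that the loop cycles forever or too often. The trivial-ear monovariant settles this, but only through the observation that its total growth is $O(n)$ rather than the naive $O(|E(G)|)=O(n^2)$; without this the repeated completion phases would only give an $O(n^4)$ bound. I would also need the small bookkeeping point that the loop cannot stall at a fixed value of the monovariant: a completion phase with $j=0$ either terminates (because (E2)--(E4) already hold) or forces a repair step that strictly increases the count, so no infinite loop is possible without progress.
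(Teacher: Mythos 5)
Your proposal is correct and follows essentially the same route as the paper: compute an (E1)-decomposition via Proposition~\ref{prop:cheriyan}, then iterate Lemmas \ref{lemma:prop2}, \ref{lemma:prop3}, \ref{lemma:prop4}, and \ref{lemma:prop567}, using the number of trivial ears as a monovariant whose total increase is $O(n)$ since the number of edges in nontrivial ears always lies between $n$ and $2n-2$. Your extra bookkeeping (the priority order guaranteeing each lemma's preconditions, and charging the $O((j+1)n^2)$ cost of Lemma~\ref{lemma:prop567} against $\sum_i j_i = O(n)$ plus the $O(n)$ bound on the number of invocations) only makes explicit what the paper's terser proof leaves implicit.
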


\begin{proof}
We first compute an ear-decomposition with property (E1), using Proposition \ref{prop:cheriyan}.
Then we apply Lemma \ref{lemma:prop2}, \ref{lemma:prop3}, \ref{lemma:prop4}, and \ref{lemma:prop567}
until all properties are satisfied.
Lemma \ref{lemma:prop567} does not decrease the number of trivial ears, and
each application of Lemma \ref{lemma:prop2}, \ref{lemma:prop3}, or \ref{lemma:prop4} increases the number of trivial ears.
As the number of trivial ears can increase by at most $n-2$ (the number of edges in nontrivial ears is always between $n$ and $2n-2$),
the running time follows.
\end{proof}

\section{Lower bounds \label{sec:lowerbounds}}

The first two lower bounds are well-known, they are lower bounds even for 2EC.
%The following lower bound is due to Cheriyan, Seb\H{o} and Szigeti \cite{cheriyan}. 
%Seb\H{o} and Vygen \cite{vygen} showed that it is even a lower bound on the natural LP relaxation of 2EC.

\begin{lemma}[Cheriyan, Seb\H{o} and Szigeti \cite{cheriyan}]
\label{lemma:lbphi}
Let $G$ be a 2-connected graph. Then
\[OPT(G) \ \ge \ n-1+\varphi (G).\]
\end{lemma}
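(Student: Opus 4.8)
The plan is to relate the quantity $n-1+\varphi(G)$ to the structure of any 2-connected spanning subgraph $H$ via its own ear-decompositions. Let $H$ be an optimal 2-connected spanning subgraph, so $|E(H)|=\opt(G)$ and $H$ spans all $n$ vertices. Since $H$ is 2-connected, by Whitney it admits an open ear-decomposition $P_0,P_1,\ldots,P_k$; because the ears partition $E(H)$ and their inner vertices partition $V(H)\setminus V(P_0)$, a counting identity holds.

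First I would set up the counting. In any ear-decomposition starting from a single vertex, each ear $P_i$ with $\ell_i$ edges contributes exactly $\ell_i-1$ new inner vertices if $P_i$ is open, and $\ell_i-1$ new vertices as well if $P_i$ is the single closed ear $P_1$ (a closed ear of length $\ell_1$ adds $\ell_1-1$ new vertices, since one of its $\ell_1$ vertices is the old endpoint). Summing over all ears gives $\sum_i(\ell_i-1)=n-1$, that is $|E(H)|-k=n-1$, so $\opt(G)=|E(H)|=n-1+k$, where $k$ is the number of ears other than $P_0$. Hence the number of ears equals $\opt(G)-(n-1)$, and to prove the bound it suffices to show that every ear-decomposition of $H$ has at least $\varphi(G)$ even ears.

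The key step is the following parity observation. The total number of edges satisfies $|E(H)|=\sum_i \ell_i$, and $\sum_i(\ell_i-1)=n-1$. The parity of the number of \emph{even} ears is therefore governed by comparing $|E(H)|$ with $n-1$ modulo the count of ears; more usefully, I would invoke the definition $\varphi(G)$ as the \emph{minimum} number of even ears over all ear-decompositions \emph{of $G$}. Here the subtlety is that the ear-decomposition I constructed lives in $H$, a subgraph of $G$, not in $G$ itself. The main obstacle is precisely this: I must argue that an ear-decomposition of the spanning subgraph $H$ is also a valid ear-decomposition witness for a quantity defined on $G$. This is resolved because $\varphi$ is defined intrinsically by the parity structure of ear-decompositions, and $H$ is 2-connected and spanning; one shows that $\varphi(H)\ge\varphi(G)$ is not what is needed, but rather that $H$ has at least $\varphi(G)$ even ears in any of its ear-decompositions, which follows from the fact that any ear-decomposition of $H$ extends (by appending the missing edges of $G$ as trivial, hence odd, 1-ears) to an ear-decomposition of $G$ with the \emph{same} number of even ears.

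Putting this together: given any ear-decomposition of $H$ with $q$ even ears, appending all edges in $E(G)\setminus E(H)$ as trivial 1-ears (each odd) yields an ear-decomposition of $G$ with exactly $q$ even ears, whence $q\ge\varphi(G)$ by minimality. Among the $k$ ears of the chosen decomposition of $H$, at least $\varphi(G)$ are even, and each even ear has length at least $2$ while each odd nontrivial ear has length at least $3$; but more simply, using $\opt(G)=n-1+k\ge n-1+\varphi(G)$ once we know $k\ge\varphi(G)$ — and indeed $k\ge q\ge\varphi(G)$ since $q\le k$ — the bound $\opt(G)\ge n-1+\varphi(G)$ follows immediately. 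The only genuinely delicate point, which I would treat carefully, is the parity bookkeeping ensuring that trivial-ear extensions do not change the even-ear count, so that the minimum $\varphi(G)$ is a legitimate lower bound on the even ears appearing inside $H$.
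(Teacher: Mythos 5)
Your proof is correct and takes essentially the same route as the paper: both arguments extend an arbitrary ear-decomposition of the spanning subgraph $H$ by the edges of $E(G)\setminus E(H)$ as trivial (hence odd) ears to obtain an ear-decomposition of $G$ with the same number of even ears, conclude that $H$ has at least $\varphi(G)$ ears, and combine this with the identity $|E(H)| = n-1+(\text{number of ears})$. The paper's proof is simply a terser version of your argument, leaving the counting identity and the parity of trivial ears implicit.
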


\begin{proof}
Let $H$ be a 2-connected spanning subgraph of $G$.
Any ear-decomposition of $H$ can be extended to an ear-decomposition of $G$ by adding trivial ears, so
$H$ contains at least $\varphi(G)$ even ears.
So $H$ has at least $\varphi(G)$ ears, and hence $|E(H)|\ge n-1+\varphi(G)$.
\end{proof}

\begin{lemma}[Garg, Santosh and Singla \cite{garg}]
\label{lemma:lbgarg}
Let $G$ be a 2-connected graph and $W$ a proper subset of the vertices. 
Let $q_W$ be the number of connected components of $G[W]$, the subgraph of $G$ induced by $W$ (and $q_{\emptyset}:=0$).
Then
\[OPT(G) \ \ge \ |W|+q_W.\]
\end{lemma}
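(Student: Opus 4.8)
Looking at Lemma \ref{lemma:lbgarg}, I need to prove that $\opt(G) \ge |W| + q_W$ for any proper subset $W \subsetneq V(G)$, where $q_W$ counts connected components of $G[W]$.

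\begin{proof}[Proof plan]
The plan is to take any 2-connected spanning subgraph $H$ of $G$ and count edges incident to the vertex set $W$ from below. Write $q:=q_W$ and let $C_1,\ldots,C_q$ be the connected components of $H[W]$ (note these refine the components of $G[W]$, so $H[W]$ has at least $q_W$ components; I will verify that the bound using $H[W]$ suffices, since more components only helps). First I would establish a useful structural fact: because $H$ is 2-connected and $W\neq V(H)$, every connected component $C$ of $H[W]$ must have at least two edges of $H$ leaving $W$, i.e., at least two edges of $H$ with exactly one endpoint in $C$ and the other in $V(H)\setminus W$. Indeed, if some component $C$ had at most one such edge, then either $C$ is separated from the rest by deleting a single vertex (the endpoint of that one crossing edge outside $W$), contradicting 2-connectivity, or $C$ is disconnected from everything, contradicting connectivity.

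The key counting step is then to bound $|E(H)|$ by accounting separately for edges inside $W$ and edges crossing the boundary. A spanning forest on $W$ using only edges of $H[W]$ has exactly $|W|-q_{H[W]}$ edges, where $q_{H[W]}$ is the number of components of $H[W]$; so $H[W]$ has at least $|W|-q_{H[W]}$ edges. For the crossing edges, by the structural fact each of the $q_{H[W]}$ components contributes at least two, and these edge sets are disjoint across components since an edge leaving $W$ touches exactly one component of $H[W]$; hence there are at least $2q_{H[W]}$ crossing edges. Adding these two disjoint contributions gives
\[
|E(H)| \ \ge \ \bigl(|W|-q_{H[W]}\bigr) + 2q_{H[W]} \ = \ |W| + q_{H[W]}.
\]
Since $H[W]$ is a subgraph of $G[W]$ on the same vertex set, its components refine those of $G[W]$, so $q_{H[W]}\ge q_W$, and therefore $|E(H)|\ge |W|+q_W$. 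Taking $H$ to be an optimum solution yields $\opt(G)\ge |W|+q_W$.

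The main obstacle to get right is the boundary-counting argument, specifically ensuring the two lower bounds (internal edges and crossing edges) are genuinely disjoint and that the ``at least two crossing edges per component'' claim is correctly justified from 2-connectivity rather than merely from connectivity. The case $W=\emptyset$ is covered by the convention $q_\emptyset=0$, giving the trivial bound $\opt(G)\ge 0$. I would also double-check the edge case where some component of $H[W]$ is a single vertex: the spanning-forest count still gives the right number of internal edges (zero for that component), and the two-crossing-edges requirement is exactly the statement that every vertex of a 2-connected graph has degree at least two when $W\neq V(H)$, which holds.
\end{proof}
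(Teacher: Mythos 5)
Your proof is correct, and it arrives at the same per-component bound as the paper, but by a genuinely different counting. The paper works with the components $C$ of $G[W]$ directly; since such a $C$ need not induce a connected subgraph of $H$, no spanning-forest count is available there, and the paper instead uses the minimum-degree property of $H$: every vertex of $C$ has degree at least two in $H$, so, writing $m_1$ for the number of edges of $H$ with both endpoints in $C$ and $m_2$ for those with exactly one endpoint in $C$, we get $2m_1+m_2\ge 2|V(C)|$; together with $m_2\ge 2$ this gives $m_1+m_2\ge |V(C)|+1$, and these edge sets are disjoint across components because $H\subseteq G$ has no edge joining different components of $G[W]$. You instead refine to the components of $H[W]$, which are connected in $H$ by construction, so the forest bound gives at least $|W|-q_{H[W]}$ internal edges; adding the $2q_{H[W]}$ crossing edges and using the monotonicity $q_{H[W]}\ge q_W$ finishes. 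Your route makes the disjointness of the internal and crossing contributions completely transparent and avoids degree-sum bookkeeping; the paper's route avoids the refinement step and the auxiliary quantity $q_{H[W]}$ altogether. Both are elementary and equally valid.

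One spot worth tightening: your dichotomy justifying the two-crossing-edges fact (``either $C$ is separated from the rest by deleting a single vertex, or $C$ is disconnected from everything'') has a degenerate case. If $C$ has exactly one crossing edge $\{u,x\}$ and $V(H)=C\cup\{x\}$ --- that is, $W=C$ and $x$ is the unique vertex outside $W$ --- then deleting $x$ disconnects nothing, because there is no ``rest''; the contradiction in that case is instead that $x$ has degree $1$ in $H$, which is impossible in a 2-connected graph. The cleanest patch is to note that a 2-connected graph on at least three vertices is 2-edge-connected, so at least two edges of $H$ leave the nonempty proper vertex set $V(C)$, and every edge leaving $C$ must end in $V(H)\setminus W$.
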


\begin{proof}
Let $H$ be a 2-connected spanning subgraph of $G$.
For each connected component $C$ of $G[W]$ there are at least two edges in $H$ that connect $C$ with $V(G)\backslash W$.
Moreover, every vertex in $C$ has degree at least two in $H$, so $H$ contains at least $|V(C)|+1$ edges with at least one endpoint in $C$. 
Summing over all $C$ yields that $|E(H)|\ge  |W| + q_W$.
\end{proof}

The following lower bound applies only to 2VC, and it requires property (P) and an ear-decomposition satisfying (E1)--(E7).

\begin{lemma}
\label{lemma:lbnew}
Let $G$ be a graph with property (P) and an ear-decomposition with (E1)--(E7).
Let $k$ be the number of nonpendant 3-ears $P$ such that the first nontrivial ear attached to $P$ has length 2 or 3. Then
\[OPT(G) \ \ge \ n-1+k.\]
\end{lemma}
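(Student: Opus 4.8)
The plan is to prove the lower bound $\opt(G)\ge n-1+k$ by combining the two previous lower bounds (Lemmas~\ref{lemma:lbphi} and~\ref{lemma:lbgarg}) in a clever way, exploiting the structural information that properties (E1)--(E7) give us about the $k$ relevant nonpendant 3-ears. The key observation is that each such 3-ear $P$, together with its first attached nontrivial ear $Q$ (which by assumption is a 2-ear or a 3-ear), forms a small gadget whose inner vertices have low degree because of (E6) and (E7): the middle vertex $w$ of $P$ has degree~$2$, and if $Q$ is a 2-ear its inner vertex also has degree~$2$. These degree-$2$ vertices are exactly the kind of structure that Lemma~\ref{lemma:lbgarg} can exploit.

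The main idea is to choose the set $W$ in Lemma~\ref{lemma:lbgarg} to consist of the low-degree inner vertices arising from these $k$ gadgets, so that $G[W]$ has many connected components — ideally one component per gadget contributing an extra $+1$ via $q_W$. First I would, for each of the $k$ nonpendant 3-ears $P$ counted in the statement, identify a canonical vertex (or small vertex set) to place into $W$: for instance the degree-2 middle vertex $w$ of $P$, and when $Q$ is a 2-ear the degree-2 inner vertex of $Q$. I would then argue, using the adjacency restrictions guaranteed by (E2)--(E5), that these chosen vertices coming from different gadgets lie in different connected components of $G[W]$, so that $q_W$ picks up one unit per gadget. Summing, Lemma~\ref{lemma:lbgarg} would yield roughly $|W|+q_W \ge (n-1) + k$ after accounting for how $|W|$ and the remaining vertices interact; alternatively, one can combine $\varphi(G)$-type counting from Lemma~\ref{lemma:lbphi} with a gadget-local improvement of $+1$ for each of the $k$ ears.

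I expect the main obstacle to be ensuring that the contributions from distinct gadgets are genuinely independent, i.e.\ that the chosen low-degree vertices do not coalesce into a single connected component of $G[W]$ and that no double-counting occurs between the $n-1$ term and the $+k$ term. This is precisely where property (P) and conditions (E2)--(E7) must be invoked carefully: (E6) forces $w$ to have degree~$2$ so its only neighbours are $v$ and $y$ (outside $W$), which keeps $w$ isolated in $G[W]$; property (P) and (E2) prevent the degree-2 inner vertices of different gadgets from being adjacent, which is what separates the components. The delicate bookkeeping is to phrase the choice of $W$ so that $|W|+q_W$ evaluates to exactly $n-1+k$ rather than something weaker, and to handle the boundary cases (the first ear, very small graphs with $n=3$) where a 3-ear can only be pendant and hence contributes nothing to $k$. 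Once the independence of the gadgets is established, the rest is a direct substitution into Lemma~\ref{lemma:lbgarg} together with Lemma~\ref{lemma:lbphi}.
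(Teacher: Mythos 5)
Your approach cannot work, and the obstruction is fundamental rather than a matter of bookkeeping. Both ingredients you want to combine, Lemma~\ref{lemma:lbphi} and Lemma~\ref{lemma:lbgarg}, are lower bounds even for the minimum 2-\emph{edge}-connected spanning subgraph: their proofs use only ear/parity counting and degree-plus-cut counting, all of which hold verbatim when $H$ is merely 2-edge-connected. Consequently, any inequality obtained by combining them (for any choice of $W$, and with any use of the adjacency structure guaranteed by (P) and (E1)--(E7), since these are properties of $G$ alone) is automatically a valid lower bound for 2EC. But the paper's remark after the lemma (Figure~\ref{fig:no2eclb}) exhibits a graph with property (P) and an ear-decomposition satisfying (E1)--(E7) in which $n-1+k=21$, yet a 2-edge-connected spanning subgraph with $20$ edges exists. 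On that graph every bound derivable from Lemmas~\ref{lemma:lbphi} and~\ref{lemma:lbgarg} is at most $20<n-1+k$, so no such derivation of Lemma~\ref{lemma:lbnew} can exist. There is also a quantitative symptom of the same problem: with $W$ chosen as the degree-2 gadget vertices you get $|W|+q_W$ of order $4k$, and Lemma~\ref{lemma:lbgarg} only counts edges incident to $W$, so it cannot produce the spanning-tree term $n-1$ unless $W$ is nearly all of $V(G)$, at which point $q_W$ collapses; the vague step ``after accounting for how $|W|$ and the remaining vertices interact'' is exactly where this breaks.

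What the paper does instead is intrinsically a 2VC argument about an optimum solution, not about $G$: let $H$ be a \emph{minimum} 2-connected spanning subgraph. For each of the $k$ ears $P$ (with first attached ear $Q$ of length 2 or 3), the degree-2 conditions (E6)/(E7) force certain edges into $H$, and, crucially, property (P) says certain edges of $Q$ are \emph{not redundant} and therefore lie in \emph{every} optimum 2VC solution, in particular in $H$ (redundancy is defined via 2-vertex-connectivity and $\opt$, and this is the step with no 2EC analogue). These forced edges form a local cycle through $P$ and $Q$, so one edge of $Q$ can be deleted from $H$ without disconnecting it; scanning the ears in reverse order, the $k$ deleted edges come from distinct ears, so after $k$ deletions $H$ is still connected, giving $|E(H)|-k\ge n-1$, i.e.\ $\opt(G)\ge n-1+k$. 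To repair your proof you would need to replace the appeal to Lemmas~\ref{lemma:lbphi} and~\ref{lemma:lbgarg} by an argument of this kind that genuinely uses optimality of $H$ and non-redundancy.
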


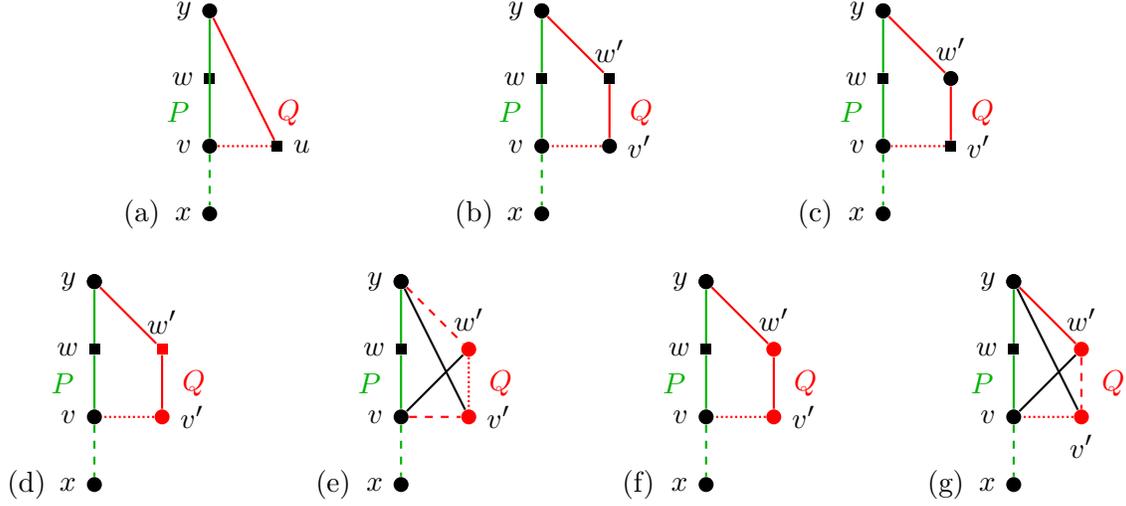
\begin{figure}
\begin{center} 		
		\hfill
		\begin{tikzpicture}[scale=0.9]
			\draw (-1,0) node {(a)};	
			\node[circle,fill, inner sep=2, label=180:$y$] (a) at (0,3) {};
			\node[fill, inner sep=2, label=180:$w$] (b) at (0,2) {};
			\node[circle,fill, inner sep=2, label=180:$v$] (c) at (0,1) {};
			\node[circle,fill, inner sep=2, label=180:$x$] (d) at (0,0) {};
			\node[fill, inner sep=2, label=0:$u$] (e) at (1,1) {};
			\color{darkgreen}
			\node[label=180:$P$] (h) at (0,1.5) {};
			\color{red}
			\node[label=0:$Q$] (h) at (0.7,1.5) {};
			\color{black}
			\draw[thick, darkgreen] (a) to (c);
			\draw[thick, dashed, darkgreen] (c) to (d);
			\draw[thick, densely dotted, red] (c) to (e);
			\draw[thick, red] (e) to (a);
		\end{tikzpicture}
		\hfill
		\begin{tikzpicture}[scale=0.9]
			\draw (-1,0) node {(b)};	
			\node[circle, fill, inner sep=2, label=180:$x$] (x) at (0,0) {};
			\node[circle, fill, inner sep=2, label=180:$v$] (v) at (0,1) {};
			\node[fill, inner sep=2, label=180:$w$] (w) at (0,2) {};
			\node[circle,fill, inner sep=2, label=180:$y$] (y) at (0,3) {};
			\node[circle,fill, inner sep=2, label=0:$v'$] (v') at (1,1) {};
			\node[fill, inner sep=2, label=90:$w'$] (w') at (1,2) {};
			\color{darkgreen}
			\node[label=180:$P$] (h) at (0,1.5) {};
			\color{red}
			\node[label=0:$Q$] (h) at (1,1.5) {};
			\color{black}
			\node[circle,fill, inner sep=2] (y') at (y) {};
			\draw[thick, dashed, darkgreen] (v) to (x);
			\draw[thick, darkgreen] (v) to (w);
			\draw[thick, red] (y) to (w');
			\draw[thick, densely dotted, red] (v') to (v);
			\draw[thick, red] (w') to (v');
			\draw[thick, darkgreen] (w) to (y');
		\end{tikzpicture}
		\hfill
		\begin{tikzpicture}[scale=0.9]
			\draw (-1,0) node {(c)};	
			\node[circle, fill, inner sep=2, label=180:$x$] (x) at (0,0) {};
			\node[circle, fill, inner sep=2, label=180:$v$] (v) at (0,1) {};
			\node[fill, inner sep=2, label=180:$w$] (w) at (0,2) {};
			\node[circle,fill, inner sep=2, label=180:$y$] (y) at (0,3) {};
			\node[fill, inner sep=2, label=0:$v'$] (v') at (1,1) {};
			\node[circle, fill, inner sep=2, label=90:$w'$] (w') at (1,2) {};
			\node[circle,fill, inner sep=2] (y') at (y) {};
			\color{darkgreen}
			\node[label=180:$P$] (h) at (0,1.5) {};
			\color{red}
			\node[label=0:$Q$] (h) at (1,1.5) {};
			\color{black}
			\draw[thick, dashed, darkgreen] (v) to (x);
			\draw[thick, darkgreen] (v) to (w);
			\draw[thick, red] (y) to (w');
			\draw[thick, densely dotted, red] (v') to (v);
			\draw[thick, red] (w') to (v');
			\draw[thick, darkgreen] (w) to (y');
		\end{tikzpicture}
		\hfill\,\\[3mm]
		\begin{tikzpicture}[scale=0.9]
			\draw (-1,0) node {(d)};	
			\node[circle, fill, inner sep=2, label=180:$x$] (x) at (0,0) {};
			\node[circle, fill, inner sep=2, label=180:$v$] (v) at (0,1) {};
			\node[fill, inner sep=2, label=180:$w$] (w) at (0,2) {};
			\node[circle,fill, inner sep=2, label=180:$y$] (y) at (0,3) {};
			\node[circle,fill=red, inner sep=2, label=0:$v'$] (v') at (1,1) {};
			\node[fill=red, inner sep=2, label=90:$w'$] (w') at (1,2) {};
			\node[circle,fill, inner sep=2] (y') at (y) {};
			\color{darkgreen}
			\node[label=180:$P$] (h) at (0,1.5) {};
			\color{red}
			\node[label=0:$Q$] (h) at (1,1.5) {};
			\color{black}
			\draw[thick, dashed, darkgreen] (v) to (x);
			\draw[thick, darkgreen] (v) to (w);
			\draw[thick, red] (y) to (w');
			\draw[thick, densely dotted, red] (v') to (v);
			\draw[thick, red] (w') to (v');
			\draw[thick, darkgreen] (w) to (y');
		\end{tikzpicture}
		\hfill
		\begin{tikzpicture}[scale=0.9]
			\draw (-1,0) node {(e)};	
			\node[circle, fill, inner sep=2, label=180:$x$] (x) at (0,0) {};
			\node[circle, fill, inner sep=2, label=180:$v$] (v) at (0,1) {};
			\node[fill, inner sep=2, label=180:$w$] (w) at (0,2) {};
			\node[circle,fill, inner sep=2, label=180:$y$] (y) at (0,3) {};
			\node[circle,fill=red, inner sep=2, label=0:$v'$] (v') at (1,1) {};
			\node[circle, fill=red, inner sep=2, label=90:$w'$] (w') at (1,2) {};
			\node[circle,fill, inner sep=2] (y') at (y) {};
			\color{darkgreen}
			\node[label=180:$P$] (h) at (0,1.5) {};
			\color{red}
			\node[label=0:$Q$] (h) at (1,1.5) {};
			\color{black}
			\draw[thick, dashed, darkgreen] (v) to (x);
			\draw[thick, darkgreen] (v) to (w);
			\draw[thick] (y) to (v');
			\draw[thick] (w') to (v);
			\draw[thick, densely dotted, red] (w') to (v');
			\draw[thick, darkgreen] (w) to (y');
			\draw[thick,dashed, red] (v) to (v');
			\draw[thick,dashed, red] (y) to (w');
		\end{tikzpicture}
		\hfill
		\begin{tikzpicture}[scale=0.9]
			\draw (-1,0) node {(f)};	
			\node[circle, fill, inner sep=2, label=180:$x$] (x) at (0,0) {};
			\node[circle, fill, inner sep=2, label=180:$v$] (v) at (0,1) {};
			\node[fill, inner sep=2, label=180:$w$] (w) at (0,2) {};
			\node[circle,fill, inner sep=2, label=180:$y$] (y) at (0,3) {};
			\node[circle,fill=red, inner sep=2, label=0:$v'$] (v') at (1,1) {};
			\node[circle, fill=red, inner sep=2, label=90:$w'$] (w') at (1,2) {};
			\node[circle,fill, inner sep=2] (y') at (y) {};
			\color{darkgreen}
			\node[label=180:$P$] (h) at (0,1.5) {};
			\color{red}
			\node[label=0:$Q$] (h) at (1,1.5) {};
			\color{black}
			\draw[thick, dashed, darkgreen] (v) to (x);
			\draw[thick, darkgreen] (v) to (w);
			\draw[thick, red] (y) to (w');
			\draw[thick, densely dotted, red] (v') to (v);
			\draw[thick, red] (w') to (v');
			\draw[thick, darkgreen] (w) to (y');
		\end{tikzpicture}
		\hfill
		\begin{tikzpicture}[scale=0.9]
			\draw (-1,0) node {(g)};	
			\node[circle, fill, inner sep=2, label=180:$x$] (x) at (0,0) {};
			\node[circle, fill, inner sep=2, label=180:$v$] (v) at (0,1) {};
			\node[fill, inner sep=2, label=180:$w$] (w) at (0,2) {};
			\node[circle,fill, inner sep=2, label=180:$y$] (y) at (0,3) {};
			\node[circle, fill=red, inner sep=2, label=270:$v'$] (v') at (1,1) {};
			\node[circle, fill=red, inner sep=2, label=90:$w'$] (w') at (1,2) {};
			\color{darkgreen}
			\node[label=180:$P$] (h) at (0,1.5) {};
			\color{red}
			\node[label=0:$Q$] (h) at (1,1.5) {};
			\color{black}
			\node[circle,fill, inner sep=2] (y') at (y) {};
			\draw[thick, dashed, darkgreen] (v) to (x);
			\draw[thick, darkgreen] (v) to (w);
			\draw[thick, red] (y) to (w');
			\draw[thick, red, densely dotted] (v') to (v);
			\draw[thick, dashed, red] (w') to (v');
			\draw[thick, darkgreen] (w) to (y');
			\draw[thick] (w') to (v);
			\draw[thick] (v') to (y);
		\end{tikzpicture}
		
\caption{\label{fig:lb}Proof of Lemma \ref{lemma:lbnew}: 
a nonpendant 3-ear $P$ (green) and the first nontrivial ear $Q$ attached to it (red).
Dotted and solid edges belong to $H$; the dotted edge is deleted.
Again, inner vertices of pendant ears are red, and vertices shown as squares have no other incident edges than shown.}

\end{center}
\end{figure}

\begin{proof}
Let $H$ be a 2-connected spanning subgraph of $G$ with minimum number of edges.
We will remove $k$ edges from $H$ and still maintain a connected graph. 

We scan the ears of our ear-decomposition of $G$ in reverse order.
If the current ear $P$ is a nonpendant 3-ear such that the first nontrivial ear $Q$ attached to $P$ has length 2 or 3,
we delete an edge from $H$ that is chosen as follows.

Let $E(P)=\{\{x, v\}, \{v,w\}, \{w, y\}\}$, and let $Q$ have endpoints $v$ and $y$ (cf.\ condition (E3)).

\case{Case 1} $Q$ is a 2-ear.

Then $w$ (by (E6)) and the inner vertex $u$ of $Q$ (by (E7)) have degree 2 in $G$.
Therefore $H$ contains all four edges $\{v,w\}, \{w,y\}, \{y,u\},$ and $\{u,v\}$. 
Deleting the edge $\{u,v\}$ will therefore not disconnect the graph (Figure \ref{fig:lb}(a)).

\case{Case 2} $Q$ is a nonpendant 3-ear.

Let $E(Q)=\{\{v, v'\}, \{v',w'\}, \{w', y\}\}$. 
By (E6), $w$ has degree 2 in $G$.
Since $Q$ is also a nonpendant 3-ear, (E6) also implies that $v'$ or $w'$ has degree 2 in $G$.
If $w'$ has degree 2 in $G$ (Figure \ref{fig:lb}(b)), then by property (P) the edge $\{v,v'\}$ is not redundant.
If $v'$ has degree 2 in $G$ (Figure \ref{fig:lb}(c)), then by property (P) the edge $\{w',y\}$ is not redundant.
In both cases, $H$ contains all five edges $\{v,w\}, \{w,y\}, \{y,w'\},\{w',v'\},$ and $\{v',v\}$.
Deleting $\{v',v\}$ will therefore not disconnect the graph.

\case{Case 3} $Q$ is a pendant 3-ear.

Let $E(Q)=\{\{v, v'\}, \{v',w'\}, \{w', y\}\}$. 
Again, $w$ has degree 2 in $G$. By (E5) and (E4), there are the following two cases.

\case{Case 3.1} $w'$ has degree 2 in $G$.

Then, again, by property (P) the edge $\{v,v'\}$ is not redundant,
so $H$ contains all five edges
$\{v,w\},\{w,y\},\{y,w'\},\{w',v'\},$ and $\{v',v\}$.
Deleting $\{v',v\}$ will therefore not disconnect the graph (Figure \ref{fig:lb}(d)).

\case{Case 3.2} $\Gamma (w')=\{v', y, v\}$ and $\Gamma (v')\subseteq\{y,v,w'\}$:.

As $H$ contains at least two edges incident to $v'$ and at least two edges incident to $w'$,
there are the following three subcases.

	\bfseries Case 3.2.1: \mdseries $H$ contains $\{v,w'\}, \{w',v'\},$ and $\{v',y\}$ (Figure \ref{fig:lb}(e)). 

	\bfseries Case 3.2.2: \mdseries $H$ contains $\{v,v'\}, \{v',w'\},$ and $\{w', y\}$ (Figure \ref{fig:lb}(f)). 

	\bfseries Case 3.2.3: \mdseries $H$ contains $\{v,w'\}, \{w',y\}, \{y,v'\},$ and $\{v',v\}$ (Figure \ref{fig:lb}(g)).
		
In each of the three subcases, deleting one of these edges will not disconnect the graph.

\bigskip		
In each case we delete an edge from $Q$. Note that no edge from $P$ or $Q$ was deleted in any previous step,
because these ears were never considered as $Q$ before.

We conclude that after deleting $k$ edges from $H$ we still have a connected graph. Hence at least $n-1$ edges remain,
so $H$ had at least $n-1+k$ edges.
\end{proof}

We remark that this is not a lower bound for 2EC, as the example in Figure \ref{fig:no2eclb} shows.

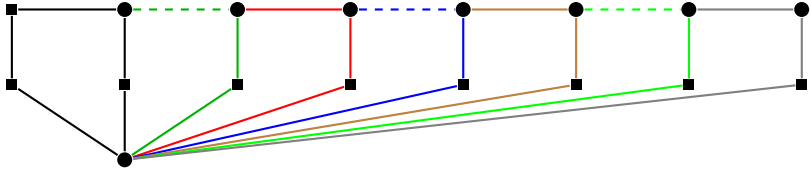
\begin{figure}
\begin{center} 
		\begin{tikzpicture}[thick,xscale=1.5]
			\node[circle, fill, inner sep=2] (a1) at (1,0) {};
			\node[ fill, inner sep=2] (a2) at (0,1) {};
			\node[ fill, inner sep=2] (a3) at (0,2) {};
			\node[circle, fill, inner sep=2] (a4) at (1,2) {};
			\node[fill, inner sep=2] (a5) at (1,1) {};
			\node[circle, fill, inner sep=2] (b1) at (2,2) {};
			\node[fill, inner sep=2] (b2) at (2,1) {};
			\node[circle, fill, inner sep=2] (c1) at (3,2) {};
			\node[fill, inner sep=2] (c2) at (3,1) {};
			\node[circle, fill, inner sep=2] (d1) at (4,2) {};
			\node[ fill, inner sep=2] (d2) at (4,1) {};
			\node[circle, fill, inner sep=2] (e1) at (5,2) {};
			\node[ fill, inner sep=2] (e2) at (5,1) {};
			\node[circle, fill, inner sep=2] (f1) at (6,2) {};
			\node[ fill, inner sep=2] (f2) at (6,1) {};
			\node[circle, fill, inner sep=2] (g1) at (7,2) {};
			\node[ fill, inner sep=2] (g2) at (7,1) {};
			\draw[] (a1) to (a2) to (a3) to (a4) to (a5) to (a1);
			\draw[darkgreen, dashed] (a4) to (b1); \draw[darkgreen] (b1) to (b2) to (a1);
			\draw[red] (b1) to (c1) to (c2) to (a1);
			\draw[blue, dashed] (c1) to (d1); \draw[blue] (d1) to (d2) to (a1);
			\draw[brown] (d1) to (e1) to (e2) to (a1);
			\draw[green, dashed] (e1) to (f1); \draw[green] (f1) to (f2) to (a1);
			\draw[gray] (f1) to (g1) to (g2) to (a1);
		\end{tikzpicture}
\caption{\label{fig:no2eclb} A 2-connected graph without redundant edges and an ear-decomposition
with (E1)--(E7), consisting of a closed 5-ear (left) and six 3-ears (each with a different color, the one on the right is pendant);
The lower bound from Lemma \ref{lemma:lbnew} is $n-1+5=21$, but the 20 solid edges
form a 2-edge-connected spanning subgraph.}

\end{center}
\end{figure}

\section{The approximation ratio}

We will now show that the nontrivial ears in an ear-decomposition with (E1)--(E7) have at most $\frac{10}{7}\opt(G)$ edges.
In the proof we need to solve the following simple optimization problem.

\begin{lemma}
\label{lemma:107}
Let $f:\mathbb{R}^7\rightarrow\mathbb{R}$ with $f(a,b,c,d,e,n,\varphi)=\frac{\frac{5}{4}(n-1)+\frac{3}{4}\varphi +\frac{1}{2}(a+b+c+e)}{\max \{n-1+\varphi, 3a+4b+2c+2d+2e, n-1+b+c\}}$.
Then
$$\max \bigl\{ f(a,b,c,d,e,n,\varphi) \ : \ a, b, c, d, e, \varphi \ge 0,\, n\ge 2,\, 2a+3b+2c+5d+6e\le n-1 \bigr\} \ = \ \frac{10}{7}.$$
\end{lemma}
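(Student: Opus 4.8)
The plan is to prove the optimization problem by showing the two inequalities $\le \tfrac{10}{7}$ and $\ge \tfrac{10}{7}$ separately. The lower bound is the easy direction: I would exhibit a single feasible point achieving the value $\tfrac{10}{7}$ and then verify it. A natural candidate is to make the constraint $2a+3b+2c+5d+6e \le n-1$ tight and to set most variables to zero, so that the numerator and the three competing terms in the denominator balance out. For instance, setting $b=c=d=e=0$, $\varphi=0$, and choosing $a$ so that $2a = n-1$ (i.e. $a=\tfrac{n-1}{2}$) gives numerator $\tfrac{5}{4}(n-1)+\tfrac{1}{2}a = \tfrac{5}{4}(n-1)+\tfrac{n-1}{4}=\tfrac{3}{2}(n-1)$ and the denominator is $\max\{n-1,\,3a,\,n-1\}=\max\{n-1,\,\tfrac{3}{2}(n-1)\}=\tfrac{3}{2}(n-1)$, giving ratio $1$; that is not tight, so I would instead search among points where the $3a+4b+2c+2d+2e$ term is the active maximum and the linear budget constraint is tight, balancing $a$ against $b$. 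The real work is to locate the extremal point, and I expect it to have the $3a+4b+\cdots$ term dominating.

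For the upper bound $f \le \tfrac{10}{7}$, the key observation is that $f$ is a quotient of a nonnegative linear form by a maximum of three nonnegative linear forms, all over a polyhedral feasible region. Since the denominator is a maximum, it suffices to bound the numerator by $\tfrac{10}{7}$ times a suitable convex combination (with nonnegative coefficients summing appropriately) of the three denominator terms, possibly using the budget constraint $2a+3b+2c+5d+6e\le n-1$ as a further nonnegative slack. Concretely, I would look for multipliers $\lambda_1,\lambda_2,\lambda_3\ge 0$ and $\mu\ge 0$ such that, as an identity in $a,b,c,d,e,n,\varphi$,
\[
\tfrac{5}{4}(n-1)+\tfrac{3}{4}\varphi+\tfrac{1}{2}(a+b+c+e) \ \le\ \tfrac{10}{7}\bigl(\lambda_1(n-1+\varphi)+\lambda_2(3a+4b+2c+2d+2e)+\lambda_3(n-1+b+c)\bigr)+\mu\bigl((n-1)-(2a+3b+2c+5d+6e)\bigr),
\]
with $\lambda_1+\lambda_2+\lambda_3\le 1$ so that the first bracket is at most the denominator. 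Matching coefficients of each variable yields a small linear system; I would solve it to find admissible multipliers.

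Concretely, I would enforce $\lambda_1+\lambda_3 = \tfrac{7}{8}$ (from the $n-1$ and $\varphi$ coefficients after clearing the $\tfrac{10}{7}$ and accounting for $\mu$), and then use the coefficient conditions on $a,b,c,d,e$ to pin down $\lambda_2$ and $\mu$, checking that all remain nonnegative and $\lambda_1+\lambda_2+\lambda_3\le 1$. The main obstacle I anticipate is that a single global choice of multipliers may not certify the bound everywhere, because which of the three denominator terms is largest varies across the feasible region; in that case I would split into cases according to which term attains the maximum (three cases), and in each case bound the numerator against that single active term together with the budget slack, reducing each case to an ordinary linear-programming feasibility check. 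Once the correct multipliers (or the case analysis) are in hand, the verification is a routine, if slightly tedious, comparison of coefficients, so I would present the multipliers explicitly and let the reader confirm the identity. The combination of the tight point from the lower-bound step and the dual certificate from the upper-bound step establishes equality at $\tfrac{10}{7}$.
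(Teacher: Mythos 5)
Your two-sided plan (a tight feasible point for ``$\ge\frac{10}{7}$'', a linear multiplier certificate for ``$\le\frac{10}{7}$'') is the same strategy as the paper, which packages the certificate as a feasible solution $(x,y,z)=(\frac{2}{7},\frac{2}{7},\frac{1}{14})$ of the dual of an auxiliary LP. However, both halves of your proposal have genuine gaps. For the upper bound, your Farkas inequality has the budget slack on the wrong side. Write $N=\frac{5}{4}(n-1)+\frac{3}{4}\varphi+\frac{1}{2}(a+b+c+e)$, $D_1=n-1+\varphi$, $D_2=3a+4b+2c+2d+2e$, $D_3=n-1+b+c$, and $s=(n-1)-(2a+3b+2c+5d+6e)$. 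You ask for $N\le\frac{10}{7}(\lambda_1D_1+\lambda_2D_2+\lambda_3D_3)+\mu s$ to hold identically over the nonnegative orthant; but on the feasible region $s\ge 0$, so this inequality is \emph{weaker} than $N\le\frac{10}{7}\sum_i\lambda_iD_i$ and implies nothing about $N$ versus $\frac{10}{7}\max_i D_i$. The usable form puts the slack on the numerator side: $N+\mu s\le\frac{10}{7}\sum_i\lambda_iD_i$ coefficient-wise. The sign is not cosmetic: your system is infeasible for every $\mu\ge0$, since the $e$-coefficient forces $\lambda_2\ge\frac{7}{40}+\frac{21}{10}\mu$ while the $(n-1)$-coefficient forces $\lambda_1+\lambda_3\ge\frac{7}{8}-\frac{7}{10}\mu$, giving $\lambda_1+\lambda_2+\lambda_3\ge\frac{21}{20}>1$. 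So your primary plan dead-ends, and the fallback you sketch fails as stated too: bounding $N$ against the single active term plus the budget slack is impossible in all three cases (the $\varphi$-coefficient of $N$ can only be covered by $D_1$; and when $D_1$ is the active term, the $a$-coefficient forces $\mu\ge\frac{1}{4}$ while the $(n-1)$-coefficient forces $\mu\le\frac{5}{28}$). The case split can only work if you also carry the case-defining inequalities $D_j\le D_i$ as constraints with their own multipliers. With the corrected sign no case split is needed: $(\lambda_1,\lambda_2,\lambda_3)=(\frac{4}{5},\frac{1}{10},\frac{1}{10})$ and $\mu=\frac{1}{28}$ give $N+\mu s\le\frac{8}{7}D_1+\frac{1}{7}D_2+\frac{1}{7}D_3$ coefficient-wise (equality in the coefficients of $n-1$, $a$, $c$, $e$), which is the paper's dual solution in slight disguise (halved, with the residual $(n-1)$-weight absorbed into $D_1$).

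The lower-bound half is missing entirely: equality requires exhibiting a feasible point of value $\frac{10}{7}$, and the only point you actually compute has value $1$, with the search deferred. Moreover, your heuristic (make the $D_2$-term strictly dominate, balance $a$ against $b$) points away from the maximizer: the paper's tight point is $n=15$, $a=4$, $e=1$, $b=c=d=\varphi=0$, where $N=\frac{5}{4}\cdot14+\frac{1}{2}\cdot5=20$ and $D_1=D_2=D_3=14$, so all three denominator terms are \emph{equal} (as complementary slackness with the certificate above predicts) and $b=0$. Without such a point your argument would at best yield ``$\le\frac{10}{7}$'', not the stated equality.
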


\begin{proof}
For $n=15$, $a=4$, $e=1$, and $b=c=d=\varphi =0$
we have $f(a,b,c,d,e,n,\varphi)=\frac{10}{7}$.

To show the upper bound, 
let $a,b,c,d,e,\varphi\ge 0$ and $n\ge 2$ with $2a+3b+2c+5d+6e\le n-1$. 
We show $f(a,b,c,d,e,n,\varphi )\le\frac{10}{7}$.
Let 
$$k \ := \ \max\{3a+4b+2c+2d+2e,\, n+\varphi -1,\, n+b+c-1\}$$
be the denominator. 
Then $n-1+\varphi\le k$ and thus
$$f(a,b,c,d,e,n,\varphi ) \ \le \ \frac{\frac{1}{2}(n-1)+\frac{3}{4}k+\frac{1}{2}(a+b+c+e)}{k}.$$

\bigskip

Consider the LP $P_{n,k}$ with variables $a,b,c,d,e$:
$$\max \bigl\{a+b+c+e : a,b,c,d,e\ge 0,\, n-1+b+c\le k,\, 3a+4b+2c+2d+2e\le k,\, 2a+3b+2c+5d+6e\le n-1 \bigr\}.$$
The dual LP $D_{n,k}$, with variables $x,y,z$ is:
\begin{align*}
\min \bigl\{(k-n+1)x + ky + (n-1)z &\ : \ x,y,z\ge 0,\, 3y+2z\ge 1,\, x+4y+3z\ge 1,\, \\
& \hspace*{5mm}  x+2y+2z\ge 1,\, 2y+5z\ge 0,\, 2y+6z\ge 1 \bigr\}.
\end{align*}

Since $(x,y,z)=(\frac{2}{7},\frac{2}{7},\frac{1}{14})$ is a feasible solution of the dual LP with value $\frac{4}{7} k - \frac{3}{14} (n-1)$, 
the primal LP value is also at most $\frac{4}{7} k - \frac{3}{14} (n-1)$, and we get  
$$f(a,b,c,d,e,n,\varphi ) \ \le \ \frac{\frac{1}{2}(n-1)+\frac{3}{4}k + \frac{4}{14}k - \frac{3}{28}(n-1)}{k} 
\ = \ \frac{11(n-1)+29k}{28k}
\ \le \ \frac{40k}{28k} \ = \ \frac{10}{7},$$
because $n-1\le k$.
\end{proof}

\begin{theorem}
There is a $\frac{10}{7}$-approximation algorithm for 2VC with running time $O(n^3)$.
\end{theorem}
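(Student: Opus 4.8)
The plan is to execute the three-step algorithm sketched in the introduction and then charge the size of its output against the three lower bounds by means of Lemma~\ref{lemma:107}. Given the input graph $G$, I would first apply Corollary~\ref{cor:redundant} to obtain in $O(n^3)$ time a spanning subgraph $\bar G$ satisfying property~(P) with $\opt(\bar G)=\opt(G)$, then apply Corollary~\ref{cor:main} to compute in $O(n^3)$ time an open ear-decomposition $P_0,\dots,P_k$ of $\bar G$ with properties (E1)--(E7), and finally output the graph $H$ consisting of all nontrivial ears. We may assume the trivial ears come last, so the nontrivial ears already form an open ear-decomposition of a spanning subgraph; hence $H$ is $2$-connected and spanning by Whitney's theorem, and the total running time is $O(n^3)$. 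It therefore remains to prove $|E(H)|\le\frac{10}{7}\opt(G)$, and since $\opt(\bar G)=\opt(G)$ we may work entirely in $\bar G$.

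For the edge count I would use that an ear of length $\ell$ has $\ell-1$ inner vertices, that the inner vertices of all ears partition $V(G)\backslash V(P_0)$ (so there are $n-1$ of them in total), and the identity $\ell=\frac54(\ell-1)+\frac{5-\ell}{4}$. Summing over the nontrivial ears gives $|E(H)|=\frac54(n-1)+\sum\frac{5-\ell}{4}$, where only ears of length $2,3,4$ contribute positively to the correction term (namely $\frac34,\frac12,\frac14$) and all longer ears contribute at most $0$. Every even ear contributes at most $\frac34$, so the even ears contribute at most $\frac34\varphi(G)$ in total; the point is that an even $4$-ear in fact contributes only $\frac14$, leaving a slack of $\frac12$ that can pay for the $\frac12$ correction of one associated $3$-ear. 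Thus everything reduces to counting $3$-ears according to their role in (E1)--(E7).

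I would introduce nonnegative integers $a,b,c,d,e$: let $a$ be the number of pendant $3$-ears, and classify each nonpendant $3$-ear $P$ by the first nontrivial ear $Q$ attached to it (which by (E3) has the two endpoints $v,y$ of $P$, cf.\ Figure~\ref{fig:3earcases}), letting $c$ and $b$ count the cases $|E(Q)|=2$ and $|E(Q)|=3$, $d$ the case $|E(Q)|=4$, and $e$ the case $|E(Q)|\ge 5$. Using the $4$-ear slack to absorb the $3$-ears counted by $d$, the correction term is at most $\frac34\varphi(G)+\frac12(a+b+c+e)$, which is exactly the numerator of $f$ from Lemma~\ref{lemma:107}. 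For the denominator I would lower-bound $\opt(G)$ by each of its three terms: Lemma~\ref{lemma:lbphi} gives $n-1+\varphi(G)$; Lemma~\ref{lemma:lbnew} gives $n-1+b+c$ (since $b+c$ is precisely the quantity $k$ there); and Lemma~\ref{lemma:lbgarg}, applied to a set $W$ assembled from the inner vertices of these $3$-ear blocks, gives $|W|+q_W=3a+4b+2c+2d+2e$ (taking both inner vertices of each pendant $3$-ear, which are adjacent and thus form a single component contributing $3$, and suitable degree-$2$ inner vertices of the nonpendant-$3$-ear blocks, which contribute $2$ or $4$). Finally, one selects pairwise disjoint sets of $2,3,2,5,6$ inner vertices from the blocks of types $a,b,c,d,e$; as the total number of inner vertices is $n-1$, this gives $2a+3b+2c+5d+6e\le n-1$. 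All hypotheses of Lemma~\ref{lemma:107} are then met, and it yields $|E(H)|\le f(a,b,c,d,e,n,\varphi(G))\cdot\opt(G)\le\frac{10}{7}\opt(G)$.

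The main obstacle is to realise the Garg-type value $3a+4b+2c+2d+2e$ and the disjointness bound $2a+3b+2c+5d+6e\le n-1$ at the same time, through a single choice of the blocks and of $W$. The delicate situation is a chain of nonpendant $3$-ears, where the ear $Q$ attached to one $3$-ear is itself a nonpendant $3$-ear (counted by $b$) and therefore competes for the same vertices as the next block. Here one must lean on (E6) (the middle vertex $w$ of each such $3$-ear has degree $2$), (E7) (the inner vertex of an attached $2$-ear has degree $2$), property~(P) (the relevant connecting edge is non-redundant, so the required edges lie in every optimum solution), and (E2) (inner vertices of distinct pendant $3$-ears are non-adjacent, so that the chosen vertices split into the correct number of components of $G[W]$). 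Verifying, for each configuration in Figure~\ref{fig:3earcases}, that the blocks stay disjoint and that $W$ attains the stated coefficients is where essentially all of the case analysis resides.
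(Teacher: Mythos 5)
Your proposal reproduces the paper's own proof architecture exactly: delete redundant edges (Corollary~\ref{cor:redundant}), compute an ear-decomposition with (E1)--(E7) (Corollary~\ref{cor:main}), keep the nontrivial ears, bound their number by $\frac{5}{4}(n-1)+\frac{3}{4}\varphi+\frac{1}{2}(a+b+c+e)$, and compare against the three lower bounds via Lemma~\ref{lemma:107}. The numerator computation, the use of the $4$-ear slack for the $d$-blocks, and the identification of $k=b+c$ in Lemma~\ref{lemma:lbnew} are all correct.

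There is, however, a concrete error in how you instantiate the variables of Lemma~\ref{lemma:107}: you let $c$ count the $3$-ears whose first attached nontrivial ear is a $2$-ear and $b$ those whose first attached ear is a $3$-ear, which is the opposite of what the lemma's coefficients require, and Lemma~\ref{lemma:107} is \emph{not} symmetric under exchanging $b$ and $c$ (its Garg term is $3a+4b+2c+2d+2e$ and its constraint is $2a+3b+2c+5d+6e\le n-1$). The coefficient $4$ and the vertex count $3$ belong to the $2$-ear case: there, (E6) and (E7) put two degree-$2$ vertices into $W$ (the vertex $w$ of $P$ and the inner vertex of $Q$), which form two singleton components of $\bar G[W]$, contributing $2+2=4$ to $|W|+q_W$, and the block owns three pairwise disjoint inner vertices. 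A $3$-ear-attached block, by contrast, owns only the two inner vertices of $P$ itself --- the attached $3$-ear $Q$ is its own block (pendant, hence counted in $a$, or nonpendant, hence counted elsewhere) --- so it contributes only $2$ to the Garg bound of Lemma~\ref{lemma:lbgarg} and only $2$ to the disjointness count. Under your stated convention, both of your claimed relations are false; the paper's own Figure~\ref{fig:no2eclb} is a counterexample: that graph satisfies (P), its decomposition satisfies (E1)--(E7) and consists of a closed $5$-ear and a chain of six $3$-ears, so in your notation $a=1$, $b=5$, $c=d=e=0$, $n=17$, and your constraint would read $2a+3b=17\le n-1=16$, which fails. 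This is precisely the ``delicate chain situation'' you flag at the end of your proposal, but your coefficients resolve it the wrong way around. The repair is purely notational --- swap the meanings of $b$ and $c$, i.e., adopt the paper's convention --- after which your argument coincides with the paper's proof.
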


\begin{proof}
Let $G$ be a given 2-connected graph.
Due to Corollary \ref{cor:redundant} we can delete some redundant edges in $O(n^3)$ time so that 
the resulting spanning subgraph $\bar G$ of $G$ satisfies property (P) 
and $\opt(G)=\opt(\bar G)$.
Then we use Corollary \ref{cor:main} to compute an ear-decomposition of $\bar G$ with (E1)--(E7) in $O(n^3)$ time.
We delete the trivial ears and output the resulting 2-connected spanning subgraph $H$. 
We show that $H$ has at most $\frac{10}{7}\opt(G)$ edges.

Let $a$ be the number of pendant 3-ears, and let 
$b$, $c$, $d$, and $e$ denote the number of nonpendant 3-ears $P$ such that the first nontrivial ear attached to $P$ 
has length 2, 3, 4, and at least 5, respectively. (See again Figure \ref{fig:3earcases}.)
We claim that
\begin{equation}
\tag{\ensuremath{\star}}\label{eq:upperbound}
|E(H)| \ \le \ \frac{5}{4}(n-1)+\frac{3}{4}\varphi (\bar G)+\frac{1}{2}(a+b+c+e)
\end{equation}

To show this, we sum over all ears, distinguishing cases as follows.
For a 3-ear $P$ whose first attached ear $Q$ is a 4-ear, we have
$|E(P)|+|E(Q)| = 7 = \frac{5}{4} (|\inn(P)|+|\inn(Q)|) + \frac{3}{4}$. 
Note that by (E3) no ear can be the first nontrivial attached ear of two 3-ears, so no 4-ear is counted twice here.
For any other 3-ear $P$ we have $|E(P)|=3 = \frac{5}{4}|\inn(P)|+\frac{1}{2}$.
For any other even ear $P$ we have $|E(P)| \le \frac{5}{4}|\inn(P)|+\frac{3}{4}$.
Finally, for any odd ear $P$ of length at least 5 we have $|E(P)| \le \frac{5}{4}|\inn(P)|$.
Summation yields (\ref{eq:upperbound}), because by (E1) there are $\varphi(\bar G)$ even ears.

Now we compare the upper bound in (\ref{eq:upperbound}) to the lower bounds of the previous section.
First, by Lemma \ref{lemma:lbphi}, $\opt(\bar G) \ge n-1+\varphi (\bar G)$.

Let $W_1$ be the set of inner vertices of pendant 3-ears, $W_2$ the set of inner vertices of 3-ears that have degree 2 and $W_3$ the set of inner vertices of
2-ears that are the first nontrivial ear attached to a 3-ear. Let $W:=W_1\cup W_2\cup W_3$. By (E6) we have $|W|=2a+2b+c+d+e$.
Then consider $\bar G[W]$, the subgraph of $\bar G$ induced by $W$.
By (E2) and (E7), this subgraph has no edges between inner vertices of different ears.
Hence it has $q_W=a+2b+c+d+e$ connected components.
By Lemma \ref{lemma:lbgarg} we have $\opt(\bar G) \ge |W|+q_W = 3a+4b+2c+2d+2e$.

Finally, by Lemma \ref{lemma:lbnew}, we have $\opt(\bar G) \ge n-1+b+c$.	
Together we conclude
\begin{equation}
\tag{\ensuremath{\star\star}}\label{eq:lowerbound}
\opt(\bar G) \ \ge \  \max \bigl\{n-1+\varphi(\bar G),\, 3a+4b+2c+2d+2e,\, n-1+b+c \bigr\}.
\end{equation}
Moreover, $2a+3b+2c+5d+6e\le n-1$, by summing over the 3-ears, taking each together with the first nontrivial ear attached to it unless it is a 3-ear itself
(again, by (E3), no nontrivial ear can be the first attached ear of two 3-ears).

From (\ref{eq:upperbound}), Lemma \ref{lemma:107}, and (\ref{eq:lowerbound}) we get
\begin{eqnarray*}
|E(H)|  &\le& \frac{5}{4}(n-1) + \frac{3}{4}\varphi (\bar G) + \frac{1}{2}(a+b+c+e) \\
&\le& \frac{10}{7} \max \bigl\{n-1+\varphi(\bar G),\, 3a+4b+2c+2d+2e,\, n-1+b+c \bigr\} \\
&\le& \frac{10}{7} \opt(\bar G) \\ 
&=& \frac{10}{7} \opt(G).
\end{eqnarray*}
\end{proof}

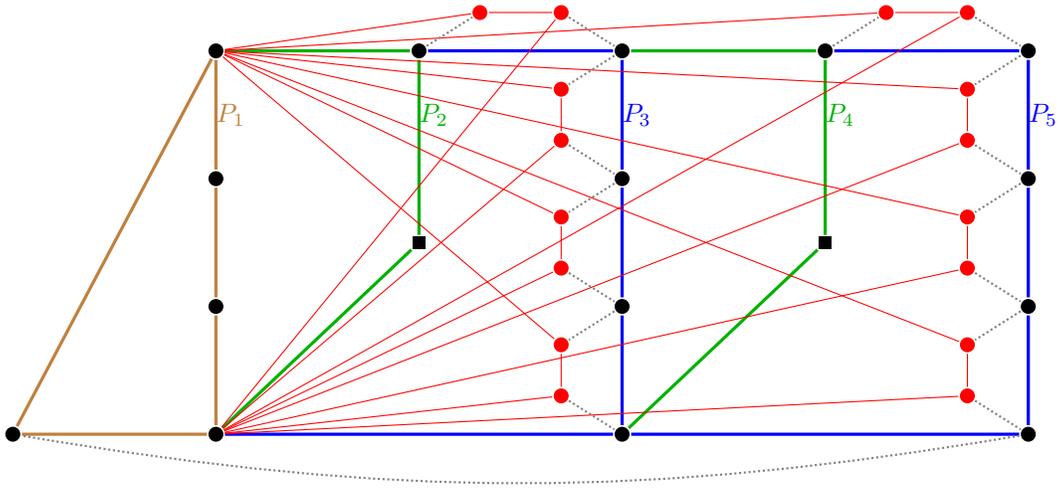
\begin{figure}
\begin{center}		
\small
		\begin{tikzpicture}[very thick,xscale=2.7, yscale =1.7]
			\color{brown}
			\node[label=0:$P_1$] at (0.9,2.5) {};
			\color{darkgreen}
			\node[label=0:$P_2$] at (1.9,2.5) {};
			\node[label=0:$P_4$] at (3.9,2.5) {};
			\color{blue}
			\node[label=0:$P_3$] at (2.9,2.5) {};
			\node[label=0:$P_5$] at (4.9,2.5) {};
			\color{black}
			\node[circle, fill, inner sep=2] (a) at (0,0) {};
			\node[circle, fill, inner sep=2] (a2) at (1,3) {};
			\node[circle, fill, inner sep=2] (a3) at (1,2) {};
			\node[circle, fill, inner sep=2] (a4) at (1,1) {};
			\node[circle, fill, inner sep=2] (a5) at (1,0) {};
			\node[fill, inner sep=2.3] (b1) at (2,1.5) {};
			\node[circle, fill, inner sep=2] (b2) at (2,3) {};
			\node[circle, fill, inner sep=2] (b3) at (3,0) {};
			\node[circle, fill, inner sep=2] (b4) at (3,1) {};
			\node[circle, fill, inner sep=2] (b5) at (3,2) {};
			\node[circle, fill, inner sep=2] (b6) at (3,3) {};
			\draw[brown] (a) to (a2) to (a3) to (a4) to (a5) to (a);
			\draw[darkgreen] (a5) to (b1) to (b2) to (a2);
			\draw[blue] (a5) to (b3) to (b4) to (b5) to (b6) to (b2);
			\node[fill, inner sep=2.3] (c1) at (4,1.5) {};
			\node[circle, fill, inner sep=2] (c2) at (4,3) {};
			\node[circle, fill, inner sep=2] (c3) at (5,0) {};
			\node[circle, fill, inner sep=2] (c4) at (5,1) {};
			\node[circle, fill, inner sep=2] (c5) at (5,2) {};
			\node[circle, fill, inner sep=2] (c6) at (5,3) {};
			\draw[darkgreen] (b3) to (c1) to (c2) to (b6);
			\draw[blue] (b3) to (c3) to (c4) to (c5) to (c6) to (c2);
			\node[circle, fill, inner sep=2, red] (d1) at (2.3,3.3) {};
			\node[circle, fill, inner sep=2, red] (d2) at (2.7, 3.3) {};
			\node[circle, fill, inner sep=2, red] (e1) at (2.7,2.7) {};
			\node[circle, fill, inner sep=2, red] (e2) at (2.7, 2.3) {};
			\node[circle, fill, inner sep=2, red] (f1) at (2.7,1.7) {};
			\node[circle, fill, inner sep=2, red] (f2) at (2.7, 1.3) {};
			\node[circle, fill, inner sep=2, red] (g1) at (2.7,0.7) {};
			\node[circle, fill, inner sep=2, red] (g2) at (2.7,0.3) {};
			\draw[red,thin] (a2) to (d1) to (d2) to (a5);
			\draw[red,thin] (a2) to (e1) to (e2) to (a5);
			\draw[red,thin] (a2) to (f1) to (f2) to (a5);
			\draw[red,thin] (a2) to (g1) to (g2) to (a5);
			\node[circle, fill, inner sep=2, red] (h1) at (4.3, 3.3) {};
			\node[circle, fill, inner sep=2, red] (h2) at (4.7, 3.3) {};
			\node[circle, fill, inner sep=2, red] (i1) at (4.7, 2.7) {};
			\node[circle, fill, inner sep=2, red] (i2) at (4.7, 2.3) {};
			\node[circle, fill, inner sep=2, red] (j1) at (4.7, 1.7) {};
			\node[circle, fill, inner sep=2, red] (j2) at (4.7, 1.3) {};
			\node[circle, fill, inner sep=2, red] (k1) at (4.7, 0.7) {};
			\node[circle, fill, inner sep=2, red] (k2) at (4.7,0.3) {};
			\draw[red,thin] (a2) to (h1) to (h2) to (a5);
			\draw[red,thin] (a2) to (i1) to (i2) to (a5);
			\draw[red,thin] (a2) to (j1) to (j2) to (a5);
			\draw[red,thin] (a2) to (k1) to (k2) to (a5);
			\draw[gray,thick, densely dotted] (b2) to (d1); \draw[gray,thick, densely dotted] (d2) to (b6) to (e1); \draw[gray,thick, densely dotted] (e2) to (b5);
			\draw[gray,thick, densely dotted] (b5) to (f1); \draw[gray,thick, densely dotted] (f2) to (b4) to (g1); \draw[gray,thick, densely dotted] (g2) to (b3); 
			\draw[gray,thick, densely dotted] (c2) to (h1); \draw[gray,thick, densely dotted] (h2) to (c6) to (i1); \draw[gray,thick, densely dotted] (i2) to (c5) to (j1); 
			\draw[gray,thick, densely dotted] (j2) to (c4) to (k1); \draw[gray,thick, densely dotted] (k2) to (c3);
			\draw[gray,thick, densely dotted,bend left=15] (c3) to (a);
		\end{tikzpicture}

\caption{\label{fig:tightex}A tight example.
		For each $k\in\mathbb{N}$ there is a graph $G_k$ with $14k+5$ vertices and property (P) and
		an ear-decomposition of $G_k$ as follows (we show $k=2$). There is a closed 5-ear $P_1$, for $i=1,\ldots,k$ 
		a 3-ear $P_{2i}$ and a 5-ear $P_{2i+1}$, then $4k$ pendant 3-ears (red), and $8k+1$ trivial ears (dotted) 
		such that (E1)--(E7) are satisfied. Deleting the trivial ears yields a minimal 2-connected spanning subgraph with $20k+5$ edges. 
		However, $G_k$ has a Hamiltonian circuit (using all the dotted edges), so $\opt(G_k)=|V(G_k)|=14k+5$.}
\end{center}
\end{figure}

Our analysis is tight; see Figure \ref{fig:tightex}.
We also remark that for graphs with minimum degree 3 we immediately get the better approximation ratio $\frac{17}{12}$ (as in \cite{cheriyan}).
	
\section*{Appendix: counterexample to the Vempala-Vetta approach}

We will now show that the approach of Vempala and Vetta \cite{vempala}, who claimed to have a $\frac{4}{3}$-approximation algorithm, does not work
without modifications. As lower bound they use 
the minimum number $L_{D2}$ of edges of a spanning subgraph in which every vertex has degree at least two.
Moreover, in a 2-connected graph $G$, they define a \emph{beta} (vertex or pair) to be a set $C\subseteq V(G)$ with $|C|\le 2$ 
that is one of at least three connected components of $G-\{s,t\}$ for some $s,t\in V(G)$.

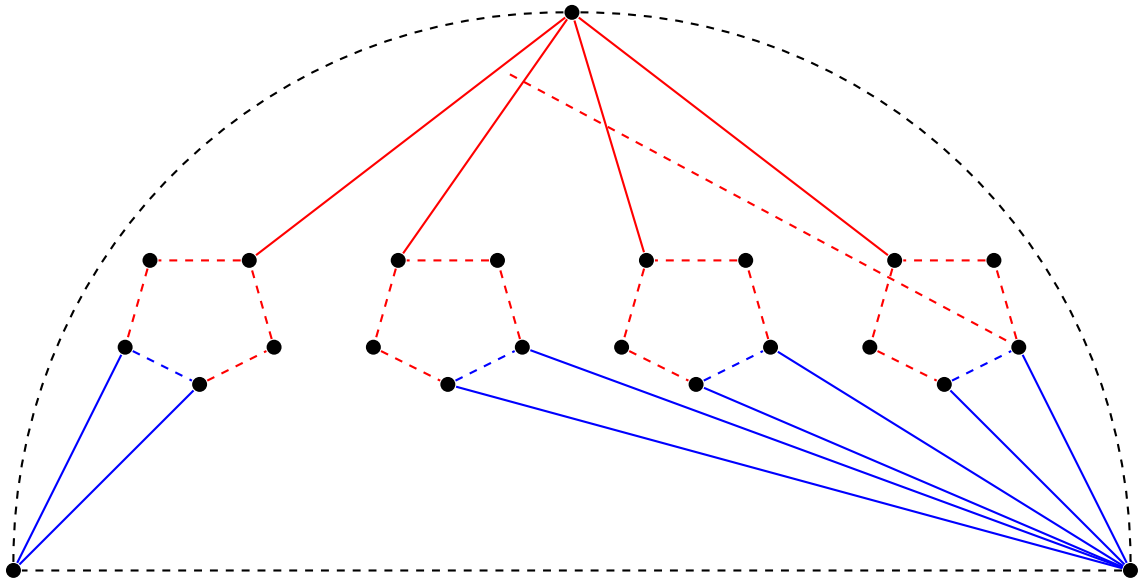
\begin{figure}
\begin{center}
	\begin{tikzpicture}[scale=1.65,thick]
		\node[circle,fill, inner sep=2] (a1) at (0.5,0.5) {};
		\node[circle,fill, inner sep=2] (a2) at (5,-4) {};
		\node[circle,fill, inner sep=2] (a3) at (-4,-4) {};
		\node[circle,fill, inner sep=2] (b1) at (-2.9,-1.5) {};
		\node[circle,fill, inner sep=2] (b2) at (-3.1,-2.2) {};
		\node[circle,fill, inner sep=2] (b3) at (-2.5,-2.5) {};
		\node[circle,fill, inner sep=2] (b4) at (-1.9,-2.2) {};
		\node[circle,fill, inner sep=2] (b5) at (-2.1,-1.5) {};
		\node[circle,fill, inner sep=2] (c1) at (-0.9,-1.5) {};
		\node[circle,fill, inner sep=2] (c2) at (-1.1,-2.2) {};
		\node[circle,fill, inner sep=2] (c3) at (-0.5,-2.5) {};
		\node[circle,fill, inner sep=2] (c4) at (0.1,-2.2) {};
		\node[circle,fill, inner sep=2] (c5) at (-0.1,-1.5) {};
		\node[circle,fill, inner sep=2] (d1) at (1.1,-1.5) {};
		\node[circle,fill, inner sep=2] (d2) at (0.9,-2.2) {};
		\node[circle,fill, inner sep=2] (d3) at (1.5,-2.5) {};
		\node[circle,fill, inner sep=2] (d4) at (2.1,-2.2) {};
		\node[circle,fill, inner sep=2] (d5) at (1.9,-1.5) {};
		\node[circle,fill, inner sep=2] (e1) at (3.1,-1.5) {};
		\node[circle,fill, inner sep=2] (e2) at (2.9,-2.2) {};
		\node[circle,fill, inner sep=2] (e3) at (3.5,-2.5) {};
		\node[circle,fill, inner sep=2] (e4) at (4.1,-2.2) {};
		\node[circle,fill, inner sep=2] (e5) at (3.9,-1.5) {};
		\draw[dashed] (a2) to (a3);
		\draw[dashed] (a1) to [bend left=45] (a2);
		\draw[dashed] (a3) to [bend left=45] (a1);
		\draw[dashed, blue] (b2) to (b3); \draw[red, dashed] (b3) to (b4) to (b5) to (b1) to (b2);
		\draw[dashed, red] (c1) to (c2) to (c3); \draw[dashed, blue] (c3) to (c4); \draw[red, dashed]  (c4) to (c5) to (c1);
		\draw[dashed, red] (d1) to (d2) to (d3); \draw[dashed, blue] (d3) to (d4); \draw[red, dashed] (d4) to (d5) to (d1);
		\draw[dashed, red] (e1) to (e2) to (e3); \draw[dashed, blue] (e3) to (e4); \draw[red, dashed]  to (e4) to (e5) to (e1);
		\draw[red] (b5) to (a1);
		\draw[blue] (b2) to (a3);
		\draw[blue] (b3) to (a3);
		\draw[red] (c1) to (a1);
		\draw[blue] (c3) to (a2);
		\draw[blue] (c4) to (a2);
		\draw[red] (d1) to (a1);
		\draw[blue] (d3) to (a2);
		\draw[blue] (d4) to (a2);
		\draw[red] (e1) to (a1);
		\draw[blue] (e3) to (a2);
		\draw[blue] (e4) to (a2);
	\end{tikzpicture}

\caption{\label{fig:counterexvv}Counterexample to \cite{vempala}.
For every $k\in\mathbb{N}$ there is a 2-connected graph $G_k$ with $5k+3$ vertices and the following properties (we show $k=4$).
There is no beta and there are no adjacent degree-2 vertices. The dashed edges form a 2-regular subgraph, so $L_{D2}=n=5k+3$.
Every 2-connected spanning subgraph must contain all red edges and two out of the three blue edges incident to each dashed pentagon.
So $\opt(G_k)\ge 7k$. For $k>12$ the ratio is worse than $\frac{4}{3}$.}

\end{center}
\end{figure}

If $G$ is a 2-connected graph without adjacent degree-2 vertices and without beta, they claim that their algorithm
finds a 2-connected spanning subgraph with at most $\frac{4}{3}L_{D2}$ edges.
However, such a subgraph does not always exist; see Figure \ref{fig:counterexvv}.

\end{document}